\newtheorem{theorem}{Theorem}
\newtheorem*{thm}{Theorem}
\newtheorem*{proposition}{Proposition}
\newtheorem*{corollary}{Corollary}
\newtheorem*{lem}{Lemma}
\theoremstyle{definition}
\theoremstyle{remark}
\DeclareMathOperator{\vol}{vol}
\DeclareMathOperator{\var}{var}
\DeclareMathOperator{\sign}{sign}
\DeclareMathOperator{\diam}{diam}
\begin{document}

\title[]{Wasserstein Distance, Fourier series \\and Applications}
\keywords{Wasserstein distance, Fourier series, Erd\H{o}s-Turan inequality, Quadratic Residues, Kronecker sequence, Discrepancy, Laplacian eigenfunctions, Uncertainty Principle, Critical Points.}
\subjclass[2010]{11L03, 35B05, 42A05, 42A16, 49Q20} 

\author[]{Stefan Steinerberger}
\address{Department of Mathematics, University of Washington, Seattle, WA 98195, USA}
\email{steinerb@uw.edu}

\begin{abstract} We study the Wasserstein metric $W_p$, a notion of distance between two probability distributions, from the perspective of Fourier Analysis and discuss applications. In particular,
we bound the Earth Mover Distance $W_1$ between the distribution of quadratic residues in a finite field $\mathbb{F}_p$ and uniform distribution by $\lesssim p^{-1/2}$ (the Polya-Vinogradov inequality implies $\lesssim p^{-1/2} \log{p}$). We also show that for continuous $f:\mathbb{T} \rightarrow \mathbb{R}_{}$ with mean value 0 
$$ (\mbox{number of roots of}~f) \cdot \left( \sum_{k=1}^{\infty}{ \frac{ |\widehat{f}(k)|^2}{k^2}}\right)^{\frac{1}{2}}  \gtrsim \frac{\|f\|^{2}_{L^1(\mathbb{T})}}{\|f\|_{L^{\infty}(\mathbb{T})}}.$$
 Moreover, we show that for a Laplacian eigenfunction $-\Delta_g \phi_{\lambda} = \lambda \phi_{\lambda}$ on a compact Riemannian manifold $W_p\left(\max\left\{\phi_{\lambda}, 0\right\}dx, \max\left\{-\phi_{\lambda}, 0\right\} dx\right)  \lesssim_p \sqrt{\log{\lambda}/\lambda} \|\phi_{\lambda}\|_{L^1}^{1/p}$, which is at most a factor $\sqrt{\log{\lambda}}$ away from sharp. Several other problems are discussed.
\end{abstract}

\maketitle

\vspace{-10pt}

\section{Introduction and Results}

\subsection{The Erd\"os-Turan inequality.} The Erd\"os-Turan inequality \cite{erd1, erd2} is classical: if $\mu$ is a probability measure on $\mathbb{T}$, then for every integer
$n$ we have the following bound 
$$ \sup_{J \subset \mathbb{T} \atop J ~\mbox{\tiny interval}}{ \left| \mu(J) - |J| \right|} \lesssim \frac{1}{n} + \sum_{k=1}^{n}{ \frac{ | \widehat{\mu}(k)|}{k}},$$
where the supremum ranges over all intervals $J \subset \mathbb{T}$ and $\lesssim$ denotes, here and henceforth, the existence of an absolute constant that
does not depend on any other parameters (in particular, here it does not depend on $n$; we refer to Rivat \& Tenenbaum \cite{rivat} for results about the implicit constant). 
Ruzsa \cite{ru1, ru2} has shown that the inequality is best possible.
It would be impossible to summarize all work on the subject, we refer to the higher-dimensional extension of the inequality due to Koksma \cite{koksma} and to the standard
textbooks \cite{dick, drmota, kuipers, mont}.  
 We also mention an inequality
of LeVeque \cite{leveque} which can be derived from the Erd\"os-Turan inequality and is thus generally weaker
$$ \sup_{J \subset \mathbb{T} \atop J ~\mbox{\tiny interval}}{ \left| \mu(J) - |J| \right|} \lesssim \left(\sum_{k=1}^{\infty}{ \frac{ | \widehat{\mu}(k)|^2}{k^2}} \right)^{1/3}.$$

 The goal of this paper is to initiate the study of such estimates for the Wasserstein metric $W_p$. $W_p$ is more refined than the discrepancy notion. The main point of this paper is that these two concepts, Wasserstein
distance and Fourier series, enjoy an interesting interplay. Moreover, while discrepancy has 
been a standard notion in which to measure the regularity of sets, the $p-$Wasserstein distance might provide a natural framework for more refined estimates, that is understanding which types of bounds are available via Fourier Analysis is a natural starting point.

\subsection{Wasserstein distance.} This notion of distance between probability measures was introduced
by Wasserstein in 1969 \cite{wasser} and received its name in a 1970 paper of Dobrushin \cite{dob}. It is now a foundational concept in optimal transport (see e.g. Villani \cite{villani}) as well as probability theory and  partial differential
equations (cf. Otto \cite{otto} and subsequent developments).
We introduce the $p-$Wasserstein distance between two probability measures
$\mu$ and $\nu$ on the one-dimensional torus $\mathbb{T} \cong [0, 2\pi)$ as
$$ W_p(\mu, \nu) = \left( \inf_{\gamma \in \Gamma(\mu, \nu)} \int_{\mathbb{T} \times \mathbb{T}}{ |x-y|^p d \gamma(x,y)}\right)^{1/p},$$
where $| \cdot |$ is the usual distance on the torus and $\Gamma(\mu, \nu)$ denotes the collection of all measures on $\mathbb{T} \times \mathbb{T}$
with marginals $\mu$ and $\nu$, respectively (also called the set of all couplings of $\mu$ and $\nu$). This definition easily carries over to other geometries $M$
in which we case we will considers measures on $M \times M$ (in particular, we will also have $M = \mathbb{T}^d$).
The special case $p=1$ is particularly nice:
by Monge-Kantorovich duality (see e.g. \cite{villani})
$$ W_1(\mu, \nu) = \sup\left\{ \int_{\mathbb{T}}{ f d\mu} - \int_{\mathbb{T}}{ f d\nu}: f~\mbox{is 1-Lipschitz} \right\}.$$
The 1-Wasserstein distance, also sometimes called Earth Mover's Distance, can be interpreted as the total amount of work ($=\mbox{distance}\times \mbox{mass})$ required to
transport $\mu$ to $\nu$. It is not necessarily a quantity that is very easy to compute since it is given by the infimum over a variational problem; in particular, any explicit transport map will result in an upper bound for $W_p$, and various ways of constructing such maps are conceivable (this paper contains two very different methods in \S 6 and \S 9). Correspondingly, the construction of lower bounds is quite a bit harder (and a recurring theme throughout this paper; it would be very nice to have more robust ways of doing this).

\begin{figure}[h!]
\begin{center}
\begin{tikzpicture}[scale=1.4]
\draw [ultra thick] (0,0.7) -- (3,0.7);
\draw [thick] (0,2) to[out=0, in =180] (1,3) to[out=0, in=180] (2,1) to[out=0, in=180] (3,2);
\draw [dashed] (0,2) -- (3,2);
\draw [->] (1, 2.5) -- (2, 1.5);
\node at (4, 2) {$\implies$};
\draw [ultra thick] (5,0.7) -- (8,0.7);
\draw [thick] (5,2) -- (8,2);
\end{tikzpicture}
\end{center}
\caption{Transporting mass to make a distribution uniform.}
\end{figure}
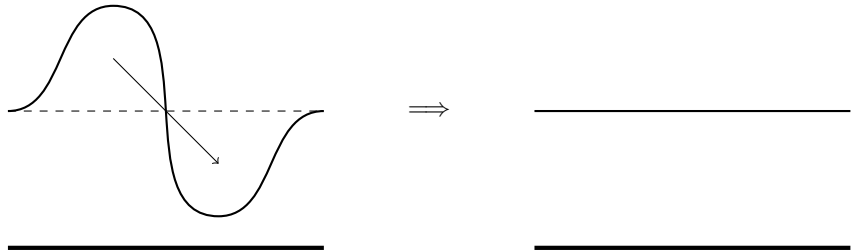

We pose a simple question: let $f:\mathbb{T} \rightarrow \mathbb{R}_{\geq 0}$ be given by a continuous function with mean value $\overline{f}$, how much does it cost to transport the
measure $f(x)dx$ to the constant measure $\overline{f}dx$? How does this depend on the Fourier coefficients of $f$?
The connection to
Fourier analysis seems natural: if $f$ is highly oscillating, then one would expect this transportation distance to be somewhat
smaller since 'being larger than average' and 'being smaller than average' should alternate more frequently.

\subsection{Organization.} The paper is organized as follows. Section \S 2 contains various estimates on the Wasserstein distance in terms of the Fourier coefficients of the function. It also establishes
a connection to the Littlewood conjecture (solved by Konyagin \cite{konyagin} and McGehee-Pigno-Smith \cite{mcgehee}).
Section \S 3 discusses applications: we study the regularity of quadratic residues in finite fields, that of the Kronecker sequence $\left\{n\sqrt{2}\right\}$ and the uncertainty principle mentioned in the abstract. The third part presents a conjecture for Laplacian eigenfunctions, proves part of it up to logarithmic factors and describes a general bound for highly oscillatory functions. $\lesssim$ and $\gtrsim$ always refer to absolute multiplicative constants, whereas $\lesssim_{a,b,c}$ and $\gtrsim_{a,b,c}$ indicate parameters on which the constant may depend. $\sim$ is used
as an abbreviation for $\gtrsim$ as well as $\lesssim$. $\sim_{a,b,c}$ is defined analogously.  Everything in this paper is real-valued, but it will sometimes be convenient to
use complex exponentials; in particular, for all arising functions, $\widehat{f}(k) = \widehat{f}(-k)$.

\section{Wasserstein Distance and Fourier Series}

\subsection{Some Elementary Bounds.} Since the Wasserstein distance is defined by an infimum, it suffices to consider explicit
constructions to obtain upper bounds. If we ignore the condition that this should be done in the most
effective way, we can ask a simpler question: what is the worst possible way it could be done? Since positive
mass has to cancel negative mass, the limiting factor is only how far it could be moved and on the torus $\mathbb{T}$ (or any bounded
domain)
$$  W_p(\max\left\{f(x),0\right\}dx, -\min\left\{f(x),0\right\}dx) \lesssim \diam(\mathbb{T}) \|f\|_{L^1(\mathbb{T})}.$$
However, we could also proceed in a different fashion: we could decompose the function into its Fourier series
$$ f(x) = \sum_{n \geq 1}{a_n \sin{n x} + b_n \cos{n x}},$$
then take the first frequency spanned by $\sin{x}$ and $\cos{x}$ and move the positive bump to the negative bump. We observe that this could
lead to the creation of a new signed measure if the function $f$ happens to be negative in the region where $a_1 \sin{x} + b_1 \cos{x}$ is positive,
however, this increases subsequent transport cost and is thus admissible in proving upper bounds.
This process
creates a new function that is hopefully somewhat closer to its mean value
$$ f_2(x) =  \sum_{n \geq 2}{a_n \sin{n x} + b_n \cos{n x}}$$
and, clearly, the transportation cost is $\lesssim \pi \sqrt{a_1^2 + b_1^2}^{}.$
We will now iterate the procedure and observe that it gets cheaper: once we are dealing with frequency $k$, the transport cost per unit of mass decreases to $\sim k^{-1}$
since the function oscillates at a higher frequency. 
\begin{center}
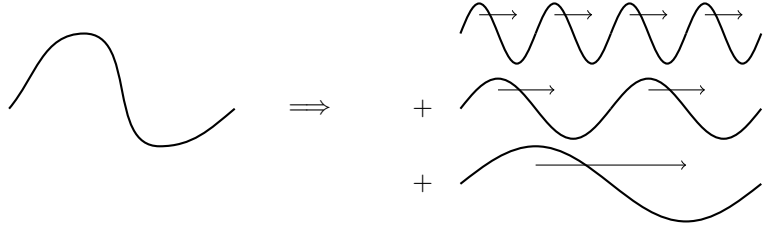
\begin{figure}[h!]
\begin{tikzpicture}
\draw [thick] (-1,1) to[out=50, in =180] (0,2) to[out=0, in=180] (1,1-0.5) to[out=0, in = 220] (2,1);
\node at (3,1) {$\implies$};
\node at (4.5,1) {$+$};
\node at (4.5,0) {$+$};
    \draw[thick] (5,0) sin (6,0.5) cos (7,0) sin (8,-0.5) cos (9,0);
    \draw[thick] (5,1) sin (5.5,1+0.4) cos (6,1+0) sin (6.5,1-0.4) cos (7,1+0);
    \draw[thick] (7,1) sin (7.5,1+0.4) cos (8,1+0) sin (8.5,1-0.4) cos (9,1+0);
    \draw[thick] (5,2) sin (5.25,2+0.4) cos (5.5,2+0) sin (5.75,2-0.4) cos (6,2+0);
    \draw[thick] (6,2) sin (6.25,2+0.4) cos (6.5,2+0) sin (6.75,2-0.4) cos (7,2+0);
    \draw[thick] (7,2) sin (7.25,2+0.4) cos (7.5,2+0) sin (7.75,2-0.4) cos (8,2+0);
    \draw[thick] (8,2) sin (8.25,2+0.4) cos (8.5,2+0) sin (8.75,2-0.4) cos (9,2+0);
\draw [->] (5.25, 2.25) -- (5.75, 2.25);
\draw [->] (6.25, 2.25) -- (6.75, 2.25);
\draw [->] (7.25, 2.25) -- (7.75, 2.25);
\draw [->] (8.25, 2.25) -- (8.75, 2.25);
\draw [->] (5.5, 1.25) -- (6.25, 1.25);
\draw [->] (7.5, 1.25) -- (8.25, 1.25);
\draw [->] (6, 0.25) -- (8, 0.25);
\end{tikzpicture}
\caption{Explicit frequency-by-frequency transport instructions.} 
\end{figure}
\end{center}
\vspace{-10pt}
Iterating this procedure and using the triangle inequality suggests
$$  W_1(f(x)dx, \overline{f} dx) \lesssim \sum_{k=1}^{\infty}{ \frac{|\widehat{f}(k)|}{k}}$$
which is eerily similar to the Erd\H{o}s-Turan inequality. This is no coincidence.
\begin{proposition} For all $n \in \mathbb{N}$ and all $f:\mathbb{T} \rightarrow \mathbb{R}_{\geq 0}$ with mean value $\overline{f}$,
$$  W_1(f(x)dx, \overline{f} dx) \lesssim \frac{\|f\|_{L^1}}{n} +  \sum_{k=1}^{n}{ \frac{|\widehat{f}(k)|}{k}}$$
\end{proposition}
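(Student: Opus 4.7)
The plan is to use Kantorovich-Rubinstein duality and approximation of Lipschitz functions by low-degree trigonometric polynomials. By duality,
$$W_1(f\,dx, \overline{f}\,dx) = \sup_{g\ \text{1-Lipschitz}} \int_{\mathbb{T}} g(x)(f(x) - \overline{f})\,dx,$$
so it suffices to bound the right-hand side uniformly in $g$ by the desired expression. The bound will then emerge from splitting $g$ into a trigonometric polynomial of degree $\leq n$ plus a small Lipschitz remainder.

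The key device is a non-negative trigonometric polynomial kernel $J_n$ of degree $\leq n$ — for instance the Jackson kernel obtained by normalizing the square of the Fej\'er kernel $F_{\lfloor n/2\rfloor}$ — satisfying $\int_{\mathbb{T}} J_n\,dx = 1$ and, crucially, $\int_{\mathbb{T}} |x|\,J_n(x)\,dx \lesssim 1/n$. I would decompose $g = J_n * g + (g - J_n * g)$ and treat the two pieces separately against the mean-zero signed density $f - \overline{f}$.

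For the tail $g - J_n * g$, the 1-Lipschitz assumption gives pointwise
$$|g(x) - (J_n*g)(x)| \leq \int_{\mathbb{T}} J_n(y)\,|g(x)-g(x-y)|\,dy \leq \int_{\mathbb{T}} |y|\,J_n(y)\,dy \lesssim \frac{1}{n},$$
so pairing against $f - \overline{f}$ (whose $L^1$-norm is at most $2\|f\|_{L^1}$) contributes $\lesssim \|f\|_{L^1}/n$. For the low-frequency piece $J_n * g$, whose Fourier support lies in $\{|k| \leq n\}$, I would combine $|\widehat{J_n}(k)| \leq 1$ with $|\widehat{g}(k)| \leq 1/|k|$ (the latter from $\widehat{g'}(k) = ik\,\widehat{g}(k)$ and $\|g'\|_\infty \leq 1$). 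Since the zeroth Fourier coefficient of $f - \overline{f}$ vanishes, Plancherel and pairing $\pm k$ yield
$$\left|\int_{\mathbb{T}} (J_n*g)(f-\overline{f})\,dx\right| \lesssim \sum_{k=1}^{n} \frac{|\widehat{f}(k)|}{k}.$$
Taking the supremum over 1-Lipschitz $g$ and adding the two estimates closes the argument.

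The main technical nuisance I expect is the choice of kernel. The natural first guess, the Fej\'er kernel itself, has first absolute moment of order $\log(n)/n$ rather than $1/n$, which would produce a spurious logarithm in the tail term and weaken the bound below the Erd\H{o}s-Tur\'an analog the proposition is mimicking. Replacing $F_n$ by the Jackson-type kernel $J_n \propto F_{\lfloor n/2\rfloor}^2$ cures this, and no further subtleties arise. An alternative, more geometric route — mirroring the primal frequency-by-frequency picture in the paper's heuristic — would be to transport $f\,dx$ first to $(J_n*f)\,dx$ at cost $\lesssim \|f\|_{L^1}/n$ using the concentration of $J_n$, and then to transport the degree-$n$ polynomial $J_n * f$ frequency by frequency; but formalizing "frequency by frequency" transport for signed residuals is messier than the dual argument outlined above.
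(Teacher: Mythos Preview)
Your argument is correct, but it differs from the paper's. The paper does not build a kernel approximation at all: it simply observes that Monge--Kantorovich duality plus the Koksma--Hlawka inequality give $W_1(f\,dx,\overline{f}\,dx)\lesssim \sup_J|\mu(J)-|J||$, and then invokes the classical Erd\H{o}s--Tur\'an inequality for discrepancy as a black box. Your route is more self-contained: by working directly with duality and a Jackson-type kernel you effectively \emph{reprove} the relevant piece of Erd\H{o}s--Tur\'an in situ (your low-frequency bound via $|\widehat g(k)|\le 1/|k|$ is exactly the mechanism behind that inequality), so you never need to cite it or Koksma--Hlawka. The paper's proof is shorter and makes the $W_1\le\text{discrepancy}$ relation explicit; yours avoids external dependencies and stays closer to the frequency-by-frequency heuristic the paper sketches before the statement. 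Your remark about the Fej\'er kernel producing a spurious $\log n$ is accurate and is precisely why the squared Fej\'er (Jackson) kernel is the right choice here.
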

\begin{proof} The proof is so short that we can discuss it right here. We assume w.l.o.g. $\|f\|_{L^1} = 1$ and thus $\overline{f} = (2\pi)^{-1}$.
 We can approximate $\mu = f(x)dx$ by a measure $\nu$ given as the sum of a large number of normalized Dirac point masses (as long as we
do not require a bound on the number). Setting
$$ \nu = \frac{2\pi}{N} \sum_{k=1}^{N}{ \delta_{x_k}}$$
and recalling the Koksma-Hlawka inequality (see e.g. \cite{kuipers})
$$\left| \frac{1}{N} \sum_{k=1}^{N}{g(x_k)} - \int_{\mathbb{T}} g(x)  dx \right| \leq \var(g)  \sup_{J \subset \mathbb{T} \atop J ~\mbox{\tiny is interval}}{ \left| \mu(J) - |J| \right|},$$
where $\var$ denotes the total variation of the function. Using $\var(g) \leq 2\pi$ for $1-$Lipschitz function on the Torus together with the description of $W_1$ via Monge-Kantorovich duality
$$ W_1(\mu, dx) = \sup\left\{ \int_{\mathbb{T}}{ g d\mu} - \int_{\mathbb{T}}{ g dx}: g~\mbox{is 1-Lipschitz} \right\} \leq 2\pi   \sup_{J \subset \mathbb{T} \atop J ~\mbox{\tiny is interval}}{ \left| \mu(J) - |J| \right|}$$
shows that the $W_1$ distance can be bounded from above by the discrepancy. The result then follows from an application of the Erd\H{o}s-Turan inequality.
\end{proof}

A priori one would perhaps be inclined to believe that the procedure sketched
above to motivate the Erd\H{o}s-Turan analogue for Wasserstein distances should be extremely lossy: frequencies are essentially treated in complete isolation from each
other, the triangle inequality is frequently employed and many redundancies are accumulated (and, indeed, all of this is true and the inequality \textit{is} lossy). However, the estimate is, when applied to trigonometric polynomials, at most a constant factor larger than the 
trivial bound $\|f\|_{L^1}$. A general smoothing procedure, introduced in \S 4.2, allows to reduce general measures to the case
of trigonometric polynomials.

\begin{thm}[McGehee-Pigno-Smith solution of the Littlewood conjecture \cite{mcgehee}] We have, for any finite set of integers $\lambda_1 < \lambda_2 < \dots < \lambda_N$ and any numbers $a_1, \dots, a_N$
$$\left\| \sum_{k=1}^{N}{ a_k e^{i \lambda_k t}} \right\|_{L^1(\mathbb{T})}  \gtrsim \sum_{k=1}^{N}{ \frac{ |a_k|}{k}} $$
\end{thm}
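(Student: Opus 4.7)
The plan is to use $L^1$-$L^\infty$ duality. Writing $F(t) = \sum_{k=1}^N a_k e^{i\lambda_k t}$, for every $g \in L^\infty(\mathbb{T})$ with $\|g\|_{L^\infty} \leq 1$ we have
$$\|F\|_{L^1(\mathbb{T})} \geq \left| \frac{1}{2\pi}\int_{\mathbb{T}} F(t)\overline{g(t)}\,dt \right| = \left| \sum_{k=1}^{N} a_k \overline{\widehat{g}(\lambda_k)} \right|.$$
It therefore suffices to produce a single $g \in L^\infty(\mathbb{T})$ with $\|g\|_{L^\infty} \leq C$ (an absolute constant) whose Fourier coefficients at the prescribed frequencies have modulus $1/k$ and phases chosen so that $a_k \overline{\widehat{g}(\lambda_k)} = |a_k|/k$; this immediately yields $\|F\|_{L^1} \gtrsim \sum_{k=1}^N |a_k|/k$.

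The difficulty is visible already in the naive candidate
$$g_0(t) = \sum_{k=1}^{N} \frac{\varepsilon_k}{k}\, e^{-i\lambda_k t}, \qquad |\varepsilon_k| = 1 \text{ suitably chosen},$$
for which, in the benchmark case $\lambda_k = k$, one has a partial sum of the conjugate logarithm $-\log(1 - e^{-it})$ whose $L^\infty$ norm grows like $\log N$. So $g_0$ must be corrected on frequencies outside $\{\lambda_k\}_{k=1}^N$ so as to enter $L^\infty$ without disturbing the prescribed Fourier coefficients.

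The approach I would pursue (essentially that of McGehee--Pigno--Smith) is an inductive construction $g_0 \equiv 0$, $g_n = g_{n-1} + h_n$, in which the correction $h_n$ is chosen so that (a) $\widehat{g_n}(\lambda_n) = \varepsilon_n/n$, (b) $\widehat{h_n}(\lambda_j) = 0$ for $j < n$, so that the coefficients already locked in are preserved, and (c) $\|g_n\|_{L^\infty}$ stays below an absolute constant independent of $n$ and $N$. The naive choice of $h_n$ as a trigonometric polynomial of $L^\infty$ norm $O(1/n)$ satisfies (a) and (b) but delivers only the logarithmically weak bound in (c) by the triangle inequality, which is precisely the trivial estimate one is trying to beat. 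The key new idea is to multiply the natural correcting polynomial by a pointwise saturating factor --- morally, a nonlinear function of $g_{n-1}$ that damps the correction where $|g_{n-1}|$ is already near the $L^\infty$ budget. This saturation manufactures spurious Fourier mass at frequencies outside $\{\lambda_1,\dots,\lambda_{n-1}\}$ (which does no harm) and unfortunately also at the frequencies $\lambda_1,\dots,\lambda_{n-1}$ (which must then be cancelled by a further correction).

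I expect step (c) to be the main obstacle, as it was historically: the delicate quantitative balance between the saturation factor and the orthogonality constraints on $\widehat{h_n}(\lambda_j)$ for $j < n$ is the heart of the argument. Closing the induction requires a careful estimate showing that the residual perturbations at previously fixed frequencies decay geometrically (or can be absorbed by a contraction argument), so that the compensating corrections do not themselves blow up the $L^\infty$ norm. This bookkeeping is the genuinely novel content of \cite{mcgehee} and is not something I would expect to reproduce in brief; the content of the plan above is only to reduce the theorem to the construction of this one dual function.
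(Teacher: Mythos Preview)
The paper does not prove this theorem: it is quoted as a known result from \cite{mcgehee}, with a pointer to the exposition in Choimet--Queff\'elec \cite{cho}, and is then only \emph{reinterpreted} as saying that the naive frequency-by-frequency transport bound $\sum_k |\widehat f(k)|/k$ never exceeds (up to a constant) the trivial $L^1$ bound for trigonometric polynomials. So there is no proof in the paper to compare your proposal against.

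For what it is worth, your outline is the correct reduction via $L^1$--$L^\infty$ duality and captures the spirit of the McGehee--Pigno--Smith construction, though the actual argument is organized somewhat differently: rather than adding corrections one frequency at a time, one groups the $\lambda_k$ into dyadic blocks and builds the dual function as a \emph{product} of factors, each of the form $\exp(h_j)$ with $h_j$ supported on a single block; the exponential plays the role of your ``saturating factor'' and the block structure, together with spectral support considerations, is what guarantees that later factors do not disturb Fourier coefficients already fixed. Your description of a one-step-at-a-time inductive scheme with residual errors to be reabsorbed is plausible in spirit but is not quite the mechanism used in \cite{mcgehee}. As you yourself note, carrying out the construction in full is the entire content of that paper and is not expected here.
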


Littlewood's conjecture is concerned with the intuition that the $L^1-$norm of a function with Fourier coefficients that are not very small should not be too small either: the
Dirichlet kernel should be an extremal example. More precisely, Littlewood asked \cite{hardy} whether it was true that for distinct integers $\lambda_1 < \dots < \lambda_N$ and coefficients $|a_i| \geq 1$
$$  \left\| \sum_{k=1}^{N}{ a_k e^{i \lambda_k t}} \right\|_{L^1(\mathbb{T})} \gtrsim \log{N}.$$
This was independently proven by Konyagin \cite{konyagin} and McGehee-Pigno-Smith \cite{mcgehee}. A nice presentation of the McGehee-Pigno-Smith argument can be found in the charming book of Choimet \& Queffelec \cite{cho}.
We can reinterpret the McGehee-Pigno-Smith inequality as stating that the trivial frequency-by-frequency transport of trigonometric polynomials
to their mean value is never less efficient (in the $W_1$ distance) than the trivial $L^1-$bound.

\subsection{The 2-Wasserstein distance.} As is perhaps not surprising, $W_2$ is special:  the linearized $W_2$ distance and Sobolev space $\dot H^{-1}$ are naturally related (see Otto \& Villani \cite[\S 7]{otto} or the textbook of Villani \cite[\S 7.6]{villani}): if $\mu$ is a measure and $d\mu$ is a small perturbation, then
$$ W_2(\mu, \mu + d\mu) = \| d\mu\|_{\dot H^{-1}(\mu)} + o(\|d\mu\|),$$
where the $o(\|d \mu\|)$ term goes faster to 0 than linearly as $\|d\mu\| \rightarrow 0$.
A rigorous formulation can be found in the book of Villani \cite[Exercise 22.20]{villani2} and states that for $h$ being smooth, bounded and compactly supported
$$ \|h\|_{H^{-1}} = \lim_{\varepsilon \rightarrow 0}{ \frac{W_2((1+\varepsilon h) \nu, \nu)}{\varepsilon}}.$$
This follows from the more general Benamou-Brenier formula \cite{ben} which states that for two positive measure $\mu, \nu$
$$ W_2(\mu, \nu) = \inf\left\{ \int_0^1 \| d \mu_t \|_{\dot H^{-1}(\mu_t)}: \mu_0 = \mu, \mu_1 = \nu\right\},$$
where the infimum ranges over all one-parameter flows from $\mu$ to $\nu$.
R. Peyre \cite{peyre} used this to show that for any two probability measure $\mu, \nu$
$$ W_2( \mu, \nu) \leq 2 \| \mu - \nu \|_{\dot H^{-1}(\mu)}.$$
Peyre's paper \cite{peyre}, which also establishes corresponding lower bounds under some additional assumptions, is discussed in the book of Santambrogio \cite{santa}.
 We give a quick summary of the argument in \S 4.2. and discuss a modification that allows for an additional cut-off of Erd\H{o}s-Turan type.
Santambrogio describes in \cite[Ex. 64]{santa} a more precise result for probability distributions $g(x)dx$ on $[0,1]$
$$ W_2(g(x)dx, dx) = \| 1 - g\|_{\dot H^{-1}([0,1])}.$$
We remark that \cite[Ex. 64]{santa} also holds for discrete measures: any such discrete measure can be approximated by a sequence of mollifications and all arising quantities are stable under taking that limit (this is part of a recurring theme that will also be further explored in \S 4.2).

\subsection{General $p-$Wasserstein bounds.} This section describes a bound for $W_p$ in the one-dimensional setting. This is accomplished via a more complicated transport plan
that makes better use of the underlying cancellation occuring between frequencies. 

\begin{theorem} Let $1 \leq p < \infty$, let $\mu$ be an absolutely continuous measure and let $d\mu$ be a perturbation, then
$$   W_p(\mu, \mu +  d\mu) \lesssim_{p}    \left( \sum_{k=1}^{\infty}{ \frac{|\widehat{d\mu}(k)|^2}{k^{2p-2} }}\right)^{\frac{1}{2p}}.$$
\end{theorem}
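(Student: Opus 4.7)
The plan is to build a transport plan dyadically in frequency, generalizing the frequency-by-frequency construction that motivates Proposition~1 while exploiting Littlewood--Paley orthogonality to gain the exponent $1/(2p)$.

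After reducing (via a smoothing/truncation as in \S 4.2, together with a smallness assumption to keep all intermediate measures positive) to the case where $d\mu$ is a trigonometric polynomial, I would decompose
$$ d\mu = \sum_{j \geq 0} g_j, \qquad \mathrm{supp}\,\widehat{g_j} \subset \{|k| \sim 2^j\}, $$
into Littlewood--Paley blocks. Each $g_j$ is mean-zero and band-limited at frequency $2^j$. The central step is the per-block bound
$$ W_p^p(g_j^+\,dx,\,g_j^-\,dx) \;\lesssim\; 2^{-jp}\,\|g_j\|_{L^2}. $$
I would prove this inside the one-dimensional quantile picture $W_p^p = \int |F^{-1}-G^{-1}|^p\,dt$, where $F,G$ are the CDFs of $g_j^\pm$. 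Plancherel together with the frequency localization of $g_j$ gives $\|G_j\|_{L^2}^2 = \sum_{k\sim 2^j} |\widehat{g_j}(k)|^2/k^2 \lesssim 2^{-2j}\|g_j\|_{L^2}^2$ for the primitive $G_j$, and Cauchy--Schwarz on $\mathbb{T}$ then yields $W_1 = \|F-G\|_{L^1} = \|G_j\|_{L^1} \lesssim 2^{-j}\|g_j\|_{L^2}$. Combining with the displacement bound $\|F^{-1}-G^{-1}\|_\infty \lesssim 2^{-j}$, which comes from the oscillation structure of a band-limited function (adjacent positive and negative bumps lie within distance $O(2^{-j})$), gives $W_p^p \leq \|F^{-1}-G^{-1}\|_\infty^{p-1}\cdot W_1 \lesssim 2^{-jp}\|g_j\|_{L^2}$.

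Combining the per-block estimates through the triangle inequality for $W_p$ across the intermediate measures $\nu_j := \mu + g_1 + \cdots + g_j$,
$$ W_p(\mu, \mu + d\mu) \leq \sum_{j} W_p(\nu_{j-1},\nu_j) \lesssim \sum_{j} 2^{-j}\,\|g_j\|_{L^2}^{1/p}. $$
Setting $B_j := 2^{-j(2p-2)}\|g_j\|_{L^2}^2$ rewrites this as $\sum_j 2^{-j/p} B_j^{1/(2p)}$, and H\"older's inequality with exponents $2p$ and $\tfrac{2p}{2p-1}$ gives
$$ \sum_j 2^{-j/p} B_j^{1/(2p)} \;\lesssim_p\; \Big(\sum_j B_j\Big)^{1/(2p)} \;\sim\; \Big(\sum_{k\geq 1}\frac{|\widehat{d\mu}(k)|^2}{k^{2p-2}}\Big)^{1/(2p)}, $$
where the geometric series $\sum_j 2^{-2j/(2p-1)}$ is absorbed into the $p$-dependent constant and the last equivalence follows from $k\sim 2^j$ on each block together with Littlewood--Paley orthogonality.

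The main obstacle I expect is the uniform quantile bound $\|F^{-1}-G^{-1}\|_\infty \lesssim 2^{-j}$, which is where essentially all the Fourier-analytic input is concentrated; one wants to argue that the sign regions of a degree-$2^j$ trigonometric polynomial cannot be arranged so as to force transport over distances larger than the natural oscillation scale, and a clean way to do this is probably a wave-packet / Shannon sampling decomposition at scale $2^{-j}$ followed by local transport within each cell. A secondary technical point is the positivity of the intermediates $\nu_j$, which necessitates the initial smallness reduction; the general case should be recovered by a density argument along the lines of the smoothing scheme in \S 4.2.
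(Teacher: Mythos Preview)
Your approach is genuinely different from the paper's. The paper does \emph{not} decompose into dyadic blocks; instead it builds a single explicit transport plan that acts on all frequencies simultaneously. For each pure frequency $a_k\sin(kx)$, positive mass at $x$ is spread over the two adjacent valleys with density proportional to $|\sin(ky)|$; the key algebraic point is that when one computes the \emph{net} mass sent from $x$ to $y$ (subtracting the mass sent back from $y$ to $x$), an absolute value disappears and one obtains the signed expression $\bigl|\sum_{k\in c(x,y)} a_k k \sin(kx)\,|\sin(ky)|\bigr|$. Squaring via Cauchy--Schwarz produces a double sum over $k,\ell$ whose support conditions force $|x-y|\lesssim 1/\max(k,\ell)$ and $|\sin(kx)|\lesssim k/\ell$; the resulting sum is controlled by Hilbert's inequality $\sum_{k,\ell} c_k d_\ell/(k+\ell)\le\pi\|c\|_{\ell^2}\|d\|_{\ell^2}$. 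All the cross-frequency cancellation thus happens in $L^2$ after squaring, and no $L^\infty$ transport bound is ever needed.

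Your scheme, by contrast, funnels all the Fourier input into the per-block displacement bound $\|F^{-1}-G^{-1}\|_\infty\lesssim 2^{-j}$, and this is a real gap, not merely a technicality. Knowing that the sign intervals of a function with spectrum in $[2^j,2^{j+1}]$ have length $\lesssim 2^{-j}$ is not enough: the quantile bound also requires that adjacent positive and negative bumps carry \emph{comparable} mass, so that no cumulative imbalance forces mass across many cells. Equivalently you need the primitive $G_j$ to be small relative to the \emph{local} $L^1$ density of $g_j^\pm$, not just small in $L^2$; for band-limited functions whose amplitude $\sqrt{a^2+b^2}$ (in the representation $g_j=a\cos Nx+b\sin Nx$) nearly vanishes on a subinterval, this balance is delicate. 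Your wave-packet suggestion is the right intuition but would amount to proving a uniform $W_\infty$ estimate for Littlewood--Paley pieces, which is a nontrivial standalone result. The paper's route trades this difficulty for an explicit plan whose cost can be computed in closed form, and the price is the somewhat mysterious appearance of Hilbert's inequality at the end.
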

The proof of Theorem 1 can be found in \S 5. The main interest of Theorem 1 is presumably the transport plan itself that is fairly nontrivial and might have applications in other settings. Note added in print: Cole Graham has
informed us that he has since obtained a result that should be compared with Theorem 1
$$ W_p(\mu, \nu) \leq c_p \|\mu - \nu\|_{\dot W^{-1,p}}$$
as well as an associated Erd\H{o}s-Turan inequality for all $p \geq 2,$
$$ \|\mu - \nu\|_{\dot W^{-1,p}} \leq \frac{c_p}{n} + c_p \left(\sum_{k=1}^{n-1}{ \frac{|\widehat \mu(k)|^q}{k^q}} \right)^{\frac{1}{q}},$$
 where $q$ is the H\"older dual of $p$. It would be desirable to have more results of this flavor.
An entirely different problem, that becomes especially relevant when trying to understand the quality of results of this type, is the existence of lower bounds. Explicit transport plans can be used for the construction of upper bounds. We now discuss lower bounds (as customary, obtaining matching lower bounds is likely the more difficult problem).
\begin{theorem} Let $f: \mathbb{T} \rightarrow \mathbb{R}_{\geq 0}$ with mean value $\overline{f}$. Then
$$ W_1(f(x)dx, \overline{f} dx) \gtrsim  \frac{1}{\|f\|_{L^{\infty}(\mathbb{T})}}\sum_{k \in \mathbb{Z} \atop k \neq 0}{ \frac{1 + \log{|k|}}{k^2} |\widehat{f}(k)|^2 }.$$
\end{theorem}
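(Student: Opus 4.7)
The plan is to construct a single $1$-Lipschitz test function and invoke Monge-Kantorovich duality $W_1 = \sup\{\int \phi(f-\overline f)\,dx:\phi\text{ is $1$-Lipschitz}\}$, so that the full right-hand side of Theorem~2 appears from a single choice of $\phi$. Define the even continuous kernel $K:\mathbb T\to\mathbb R$ by its absolutely convergent Fourier series
$$ K(x)=2\sum_{k\geq 1}\frac{1+\log k}{k^2}\cos(kx), $$
and set
$$ g(x)=\frac{c}{\|f\|_{L^\infty}}\bigl(K*(f-\overline f)\bigr)(x) $$
for an absolute constant $c>0$ chosen below. A direct Parseval computation then gives
$$ \int_{\mathbb T}g(x)\bigl(f(x)-\overline f\bigr)\,dx = \frac{4\pi^2 c}{\|f\|_{L^\infty}}\sum_{k\neq 0}\frac{(1+\log|k|)\,|\widehat f(k)|^2}{k^2}, $$
which is the exact right-hand side of Theorem~2 (up to the factor $c$). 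The whole problem is thus reduced to checking that $g$ is $1$-Lipschitz for some absolute $c>0$.

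The Lipschitz bound is immediate from Young's inequality once one controls $\|K'\|_{L^1}$: since $g' = (c/\|f\|_{L^\infty})K'*(f-\overline f)$,
$$ \|g'\|_{L^\infty}\leq \frac{c\,\|K'\|_{L^1}\,\|f-\overline f\|_{L^\infty}}{\|f\|_{L^\infty}}\leq 2c\,\|K'\|_{L^1}, $$
so the choice $c=1/(2\|K'\|_{L^1})$ makes $g$ a $1$-Lipschitz test function, provided $\|K'\|_{L^1}$ is a finite absolute constant. Everything therefore hinges on a single analytic lemma.

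The key lemma is $\|K'\|_{L^1(\mathbb T)}<\infty$, where
$$ K'(x)=-2\sum_{k\geq 1}\frac{1+\log k}{k}\sin(kx). $$
The piece $\sum_{k\geq 1}\sin(kx)/k$ is the classical bounded sawtooth, so the real issue is $\sum_{k\geq 2}(\log k/k)\sin(kx)$. I would handle this by Abel summation using the conjugate Dirichlet bound $\bigl|\sum_{k\leq N}\sin(kx)\bigr|\lesssim 1/|\sin(x/2)|$ together with the telescoping estimate $\bigl|\log(k+1)/(k+1)-\log k/k\bigr|=O(\log k/k^2)$; alternatively, one recognizes this series as the distributional derivative of the absolutely convergent function $H(x)=-\sum_{k\geq 2}(\log k/k^2)\cos(kx)$, for which an elementary asymptotic analysis at the origin (split the sum at $k\sim 1/|x|$ and use $\sin^2(kx/2)\approx (kx)^2/4$ on the low-frequency tail) yields $H(x)-H(0)\sim x\log(1/|x|)$, hence $H'(x)\sim \log(1/|x|)$, which is integrable on $\mathbb T$. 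This $L^1$-bound on $K'$ is the one genuine obstacle of the proof; everything else is formal Fourier algebra, and once $\|K'\|_{L^1}$ has been shown finite the constant $1/(2\|K'\|_{L^1})$ gets absorbed into the implicit constant of the $\gtrsim$ in Theorem~2.
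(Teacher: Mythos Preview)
Your approach is correct and genuinely different from the paper's. The paper does \emph{not} use duality: it fixes an arbitrary (near-optimal) transport map $T$, observes that for every interval $[x,x+t]$ the excess mass $\int_x^{x+t}f-\overline f\,dy$ must be moved across the boundary, derives the local lower bound $\int_x^{x+t}|T(y)-y|f(y)\,dy\gtrsim \|f\|_{L^\infty}^{-1}\bigl(\int_x^{x+t}(f-\overline f)\bigr)^2$, integrates over $x$ to produce $\sum_{k\neq 0}|\widehat f(k)|^2 k^{-2}|e^{ikt}-1|^2$ via Plancherel, and finally averages over $t\in(0,\pi)$ against $dt/t$ to manufacture the $\log|k|$ weight. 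Your route is cleaner: you build a single Lipschitz witness $g=c\|f\|_{L^\infty}^{-1}K*(f-\overline f)$ whose pairing with $f-\overline f$ already contains the full weight $(1+\log|k|)/k^2$. This is precisely the kind of construction the paper flags as an open problem in \S2.3 (``how one would go about finding good test functions which does not seem easier''), so your argument has independent interest. The only substantive step is $\|K'\|_{L^1}<\infty$; your sketch is fine, and in fact the $\sum_{k\geq 2}\frac{\log k}{k}\sin(kx)$ piece is handled immediately by the classical criterion (Young/Boas) that for $a_k\downarrow 0$ the sine series $\sum a_k\sin(kx)$ lies in $L^1$ iff $\sum a_k/k<\infty$, here $\sum\frac{\log k}{k^2}<\infty$. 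What the paper's argument buys instead is a template that generalizes: the same ``excess-mass-in-intervals'' mechanism reappears in the proof of Theorem~4 with intervals replaced by nodal domains, whereas your kernel $K$ is tailored to this specific weight.
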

The proof of this result is contained in \S 6.
By H\"older's inequality, this bound
for $W_1$ implies a lower bound for $W_p$ for all $p \geq 1$ but one would, of course, assume that in most circumstances the
Wasserstein distance is actually growing with $p$. It would be nice if one had lower bounds reflecting this sort of behavior.

\begin{quote}
\textbf{Problem.} How does one effectively prove \textit{lower} bounds on Wasserstein distances?
\end{quote}
A natural approach is duality, see Loeper \cite{loeper}, Maury \& Venel \cite{maury} or the discussion in the book of Santambrogio \cite[\S 5.5.2]{santa}. This, however, seems to shift the problem towards the question of how one would go about finding good test functions which does not seem easier.

\section{Applications}

\subsection{ Number Theory.}

These results have nontrivial implications in Analytic Number Theory. Despite being a standard notion in probability theory and statistics, the Wasserstein distance does not
seem to be frequently used in Number Theory (except for the $W_1-$distance between a set of numbers and an ordered set of numbers which appears frequently, see for example \cite[Theorem 2.1.4]{kuipers}); two
recent papers where it appears in a more substantial role are \cite{mehr, saks}. 
\begin{center}
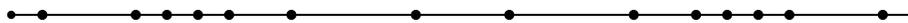
\begin{figure}[h!]
\begin{tikzpicture}[scale=1.2]
\draw [thick] (0,0) -- (10,0);
\filldraw (0,0) circle (0.04cm);
\filldraw (10/29,0) circle (0.05cm);
\filldraw (40/29,0) circle (0.05cm);
\filldraw (50/29,0) circle (0.05cm);
\filldraw (60/29,0) circle (0.05cm);
\filldraw (70/29,0) circle (0.05cm);
\filldraw (90/29,0) circle (0.05cm);
\filldraw (130/29,0) circle (0.05cm);
\filldraw (160/29,0) circle (0.05cm);
\filldraw (200/29,0) circle (0.05cm);
\filldraw (220/29,0) circle (0.05cm);
\filldraw (230/29,0) circle (0.05cm);
\filldraw (240/29,0) circle (0.05cm);
\filldraw (250/29,0) circle (0.05cm);
\filldraw (280/29,0) circle (0.05cm);
\end{tikzpicture}
\caption{The renormalized quadratic residues in $\mathbb{F}_{29}$ displayed on $[0,1]$. Every dot except the one in zero represents two quadratic residues. How costly is it to move this point measure to the uniform distribution?}
\end{figure}
\end{center}
We illustrate the usefulness of this notion with an example on $\mathbb{T} \cong [0,1]$: let $p$ be an odd prime, what can be said  about the distribution properties of the set of quadratic residues in the finite field $\mathbb{F}_p$ rescaled to the unit interval $[0,1]$? This set can be explicitly written as $\left\{ \left\{ n^2/p\right\}: 1 \leq n \leq p \right\} \subset [0,1]$, where
$\left\{ x \right\} = x - \left\lfloor x \right\rfloor$ denotes the fractional part. 

 It is classical (see e.g. \cite{mont}) that this measure is rather evenly distributed and that the discrepancy of their distribution can be analyzed using the Erd\H{o}s-Turan inequality: we write 
$$ \mu = \frac{1}{p} \sum_{k=1}^{p}{ \delta_{  \left\{ k^2/p\right\}}}.$$
The $j-$th Fourier coefficient can be written in explicit form
$$ \widehat{\mu}(j) = \int_{0}^{1}{ e^{-2 \pi i j x} d\mu} = \frac{1}{p}\sum_{k=1}^{p}{ e^{-2 \pi i j \frac{k^2}{p}}}.$$
This is a Gauss sum for which a closed form expression is available and
$$ |\widehat{\mu}(j)| = \begin{cases} 1 \qquad &\mbox{if}~p~\mbox{divides}~j \\ p^{-1/2} \qquad &\mbox{otherwise.} \end{cases}$$
We use this identity in combination with the Erd\H{o}s-Turan inequality 
\begin{align*}
 \sup_{J \subset \mathbb{T} \atop J ~\mbox{\tiny is interval}}{ \left| \mu(J) - |J| \right|} &\lesssim \frac{1}{n} + \sum_{k=1}^{n}{ \frac{ | \widehat{\mu}(k)|}{k}}\\
&\leq  \frac{1}{n} + \sum_{1 \leq k \leq n \atop p|k}{\frac{1}{k}} + \frac{1}{\sqrt{p}}\sum_{1 \leq k \leq n}{\frac{1}{k}}\\
&\lesssim  \frac{1}{n} +  \frac{1}{p} \log{\left( \frac{n}{p} \right)}  + \frac{\log{n}}{\sqrt{p}}
\end{align*}
Optimizing in $n$ leads to $n=p$ and 
$$  \sup_{J \subset \mathbb{T} \atop J ~\mbox{\tiny is interval}}{ \left| \mu(J) - |J| \right|} \lesssim \frac{\log{p}}{\sqrt{p}}.$$
This can be seen as a special case of the Polya-Vinogradov estimate for character sums. However, we can obtain a refinement for the $W_2-$distance by using Santambrogio's exercise
$$ W_2(g(x)dx, dx) = \| 1 - g\|_{\dot H^{-1}([0,1])}$$
to conclude that
\begin{align*}
W_2(\mu, dx) \leq  \left(\sum_{k \in \mathbb{Z} \atop k \neq 0}{ \frac{|\widehat{\mu}(k)|^2}{|k|^2}}\right)^{1/2} \leq \left(\sum_{k \in p\mathbb{Z} \atop k \neq 0}{ \frac{1}{|k|^2}} + \sum_{k \in \mathbb{Z}\setminus p\mathbb{Z}}{\frac{1}{p} \ \frac{1}{|k|^2}}  \right)^{1/2} \lesssim \frac{1}{\sqrt{p}}.
\end{align*}

Put differently, while irregularities in the distribution of quadratic residues may occur, they do not occur frequently. This approach might be applicable for many other examples of naturally occuring objects in Analytic Number Theory.
\begin{quote}
\textbf{Open Problem.} Establish Wasserstein bounds from above and below for the regularity of sequences and sets appearing in Number Theory.
\end{quote}
Since the Wasserstein distance is smaller than notions of discrepancy, another natural problem is to ask to which extent irregularity of distribution phenomena continue to hold. We recall that if $(x_n)_{n = 1}^{\infty}$
is a sequence on $[0,1]$, then the sequence of measures
$$ \mu_N = \frac{1}{N} \sum_{n=1}^{N}{\delta_{x_n}}$$
satisfies
$$\sup_{J \subset \mathbb{T} \atop J ~\mbox{\tiny is interval}}{ \left| \mu_{N}(J) - |J| \right|}  \gtrsim \frac{\log{N}}{N} \quad 
\mbox{for infinitely many}~N.$$
The sequence of measures so obtained cannot be uniformly regular -- this phenomenon is sometimes refered to as \textit{Irregularities of Distributions}. This particularly celebrated result is due to W. M. Schmidt \cite{schm}. The factor $\log{N}$ is sharp, many examples attaining it are known (see \cite{dick, drmota, kuipers}). The problem is wide open in higher dimensions \cite{bil1, bil2, bil3, bil4, bil5, bil6}. 
An example of a sequence with the highest possible degree of regularity is the Kronecker (or $n\alpha-$) sequence  $\left(\left\{n \alpha\right\}\right)_{n=1}^{\infty}$. Here, $\alpha \in \mathbb{R}$ is assumed to have the property that there exists $c>0$ $$ \forall \frac{p}{q} \in \mathbb{Q} \qquad \left| \alpha - \frac{p}{q} \right| \geq \frac{c}{q^2}.$$
A classical example of such a \textit{badly approximable} number is $\sqrt{2}$. 
\begin{theorem}[Earth Mover Distance for Kronecker sequences] Let $\alpha$ be badly approximable, then
$$ W_2\left(  \frac{1}{N} \sum_{n=1}^{N}{\delta_{\left\{ n \alpha \right\}}} , dx\right) \lesssim_{\alpha} \frac{(\log{N})^{\frac{1}{2}}}{N}.$$
\end{theorem}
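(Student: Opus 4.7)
The strategy is to use the Fourier bound implicit in Santambrogio's exercise (extended to discrete measures by mollification, as discussed earlier for the quadratic residues),
\[
W_2(\mu_N, dx)^2 \;\lesssim\; \sum_{k \neq 0} \frac{|\widehat{\mu_N}(k)|^2}{k^2},
\]
and to show the right-hand side is $\lesssim_\alpha \log N / N^2$. The Fourier coefficients of $\mu_N = \frac{1}{N}\sum_{n=1}^N \delta_{\{n\alpha\}}$ are geometric sums, so
\[
|\widehat{\mu_N}(k)| \;=\; \frac{1}{N}\left|\frac{\sin(\pi Nk\alpha)}{\sin(\pi k\alpha)}\right| \;\leq\; \min\!\Bigl(1,\, \tfrac{1}{2N\|k\alpha\|}\Bigr),
\]
where $\|\cdot\|$ denotes distance to the nearest integer. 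I split $k$ according to the size of $\|k\alpha\|$.

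\emph{Case A}, $\|k\alpha\| \leq 1/(2N)$. Use the trivial bound $|\widehat{\mu_N}(k)|^2 \leq 1$. The badly approximable hypothesis $\|k\alpha\| \geq c_\alpha/|k|$ forces $|k| \geq 2c_\alpha N$, and applying BA to the difference of two elements of this set shows that consecutive such $k$ are spaced at least $\asymp_\alpha N$ apart. A direct geometric sum then gives $\sum_{k \in A} 1/k^2 \lesssim_\alpha 1/N^2$.

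\emph{Case B}, $\|k\alpha\| > 1/(2N)$. Use $|\widehat{\mu_N}(k)|^2 \leq 1/(4N^2\|k\alpha\|^2)$ and decompose dyadically into $J_\ell = \{k : 2^\ell/(2N) < \|k\alpha\| \leq 2^{\ell+1}/(2N)\}$ for $0 \leq \ell \lesssim \log_2 N$. The key step is a spacing estimate: for any two consecutive $k < k'$ in $J_\ell$, the triangle inequality yields $\|(k'-k)\alpha\| \leq 2^{\ell+1}/N$, and then BA applied to the difference gives $k' - k \gtrsim_\alpha N/2^{\ell+1}$. Together with the lower bound $k \geq c_\alpha N/2^\ell$ (also from BA) on the first element of $J_\ell$, comparison with a geometric series produces
\[
\sum_{k \in J_\ell} \frac{1}{k^2} \;\lesssim_\alpha\; \frac{4^\ell}{N^2}.
\]
Each dyadic band therefore contributes
\[
\sum_{k \in J_\ell}\frac{|\widehat{\mu_N}(k)|^2}{k^2} \;\leq\; \frac{1}{4N^2}\cdot\frac{4N^2}{4^\ell}\cdot\frac{4^\ell}{N^2} \;\lesssim_\alpha\; \frac{1}{N^2},
\]
and summing over the $O(\log N)$ relevant values of $\ell$ yields $\sum_{k \in B} |\widehat{\mu_N}(k)|^2/k^2 \lesssim_\alpha \log N/N^2$. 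Combining both cases gives the desired Fourier bound, hence $W_2(\mu_N, dx) \lesssim_\alpha (\log N)^{1/2}/N$.

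\emph{Main obstacle.} The crux is the dyadic spacing estimate in Case B: a naive application of Koksma-type discrepancy to count elements of $J_\ell$ in each interval loses a logarithmic factor and ultimately yields only $W_2 \lesssim \log N / N$. Exploiting the BA hypothesis for \emph{differences} of elements of $J_\ell$ (rather than for elements themselves) is what saves the extra $\log$ and matches the known $L^2$-discrepancy bound for the Kronecker sequence.
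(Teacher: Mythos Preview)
Your argument is correct. Both you and the paper start from the same $\dot H^{-1}$ bound $W_2(\mu_N,dx)^2 \lesssim \sum_{k\neq 0}|\widehat{\mu_N}(k)|^2/k^2$ and the same geometric-sum estimate $|\widehat{\mu_N}(k)|\lesssim \min(1, (N\|k\alpha\|)^{-1})$, and both ultimately rely on applying the badly-approximable hypothesis to \emph{differences} in order to extract a spacing property. The organizational difference is that the paper decomposes dyadically in the frequency $k$: on each block $2^\ell\leq k\leq 2^{\ell+1}$ it observes that the values $\{\|k\alpha\|\}$ are $\gtrsim 2^{-\ell}$-separated and bounded below by $\gtrsim 2^{-\ell}$, whence $\sum_{2^\ell\le k\le 2^{\ell+1}}\|k\alpha\|^{-2}\lesssim 4^\ell$ and each block contributes $\lesssim N^{-2}$. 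You decompose dyadically in $\|k\alpha\|$ instead: on each level set $J_\ell$ you show the \emph{frequencies} $k$ are $\gtrsim_\alpha N/2^\ell$-separated and start at $\gtrsim_\alpha N/2^\ell$, so $\sum_{k\in J_\ell} k^{-2}\lesssim_\alpha 4^\ell/N^2$, and again each band contributes $\lesssim_\alpha N^{-2}$. These are dual slicings of the same sum and yield the same $O(\log N)$ count of contributing bands. Your Case~A absorbs what the paper handles by truncating at $k=N^2$ and using $|\widehat{\mu_N}(k)|\le 1$ on the tail; your treatment is arguably cleaner there. One cosmetic remark: the bound $\sum_{k\in J_\ell}k^{-2}\lesssim 4^\ell/N^2$ is a $\zeta(2)$-type comparison rather than a geometric series, but the estimate itself is fine.
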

This is a factor $(\log{N})^{1/2}$ smaller than what is possible in the discrepancy regime. It implies that Kronecker sequences, while being unable to avoid discrepancy $\sim (\log{N})/N$) can manage to avoid having it on sets of large measure. We quickly recall the notion of the discrepancy function for a set of points $\left\{x_1, \dots, x_N\right\} \subset [0,1]$ given by
$$ d(x) = \left| \frac{ \# \left\{1 \leq i \leq N: 0 \leq x_i \leq x\right\}}{N} - x\right|.$$
The star-discrepancy $D_N$ of the set is then defined as $D_N = \max_{0 \leq x \leq 1}{d(x)}$ and the $L^p-$discrepancy $D_N^{(p)}$ is given by
$$ D_N^{(p)} = \left( \int_{0}^{1}{d(x)^p dx} \right)^{\frac{1}{p}}.$$
The $L^p-$discrepancy of the Kronecker sequence is $\mathcal{O}_p(\sqrt{\log{N}}/N)$ for $1 \leq p < \infty$ but is at least a factor $\sqrt{\log{N}}$ larger for $p=\infty$. Wasserstein distances should be related to notions of $L^p-$discrepancy (at least in one dimension) which would naturally connect these results; we believe this to be a promising question that should be within reach. It also seems likely that the connection between star-discrepancy and $W^p-$Wasserstein distance is not quite as pronounced in higher dimensions though this remains to be understood. Can Theorem 3 be further improved? Can it be extended to higher dimensions? \\

We conclude this section by emphasizing another question that could be quite interesting: it is known that for a universal $c>0$, we have $D_N(x_n) \geq c \log{(N)} N^{-1}$ for infinitely many $N$. 
 Is there an Irregularities of Distribution phenomenon for the Wasserstein distance?
\begin{quote}
\textbf{Open problem.} Let $(x_n)_{n=1}^{\infty}$ be a sequence of points on $[0,1]$ and define $\mu_N = N^{-1} \sum_{k=1}^{N}{\delta_{x_k}}$. Is it possible for the sequence $  N \cdot W_p(\mu_N, dx)$ to stay bounded?
\end{quote}
The case $p=1$ is special since unboundedness there would imply unboundedness for all $p>1$. We do not know how hard this question is. One would perhaps be inclined to conjecture that $ W_p(\mu_N, dx) \gtrsim c_p \sqrt{\log{N}}/N$ for infinitely many $N$ for some $c_p > 0$ depending only on $p$. It would be of substantial interest to have results of this flavor in higher dimensions as well. 

\subsection{An Uncertainty Principle.} This section discusses a result that is quite unrelated to the notion of Wasserstein distance and was, somewhat to our surprise, obtained as a byproduct.
A classical result often mentioned in the context of the Sturm Oscillation Theorem is that a continuous function $f$ only comprised of large frequencies has many roots: more precisely, if
$$ f(x) = \sum_{k=n}^{\infty}{a_{k} \sin{(k x)} + b_k \cos{(k x)}},~\mbox{then} \qquad \# \left\{x \in \mathbb{T}: f(x) = 0 \right\} \gtrsim n.$$
A proof via complex analysis is immediate: complexify, factor out the largest monomial and use the argument principle. There is also a purely 'real' proof via partial differential equations (see \cite{stein}).
We now discuss a more quantitative variant capturing the same phenomenon. 

\begin{theorem} For all continuous $f:\mathbb{T} \rightarrow \mathbb{R}_{}$  with mean value 0
$$ (\emph{number of roots of}~f) \cdot \left( \sum_{k=1}^{\infty}{ \frac{ |\widehat{f}(k)|^2}{k^2}}\right)^{\frac{1}{2}}  \gtrsim \frac{\|f\|^{2}_{L^1(\mathbb{T})}}{\|f\|_{L^{\infty}(\mathbb{T})}}.$$
\end{theorem}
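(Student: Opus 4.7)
The plan is to move the inequality from the Fourier side to the physical side by introducing the antiderivative $F(x)=\int_0^x f(t)\,dt$, which is well-defined and $\|f\|_\infty$-Lipschitz on $\mathbb{T}$ because $\overline{f}=0$. Parseval applied to the identity $\widehat{F}(k)=\widehat{f}(k)/(ik)$ for $k\neq 0$ yields
\[
\|F-\overline F\|_{L^2(\mathbb{T})}\;\simeq\;\left(\sum_{k\geq 1}\frac{|\widehat f(k)|^2}{k^2}\right)^{1/2},
\]
and Cauchy--Schwarz on the bounded torus gives $\|F-\overline F\|_{L^2}\gtrsim\|F-\overline F\|_{L^1}\geq \inf_{c\in\mathbb{R}}\|F-c\|_{L^1}$. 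The theorem therefore reduces to the purely geometric claim
\[
\inf_{c\in\mathbb{R}}\|F-c\|_{L^1(\mathbb{T})}\;\gtrsim\;\frac{\|f\|_{L^1}^2}{N\,\|f\|_\infty},
\]
where $N$ denotes the number of sign changes of $f$ (bounded above by the number of roots). By Monge--Kantorovich duality the left-hand side is precisely $W_1(\max\{f,0\}dx,\max\{-f,0\}dx)$, which keeps the argument within the Wasserstein framework of the paper.

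To prove the geometric claim, I would decompose $\mathbb{T}$ along the sign changes of $f$. Label them cyclically $x_1<\dots<x_N$, set $I_i=[x_i,x_{i+1}]$ and $a_i=|F(x_{i+1})-F(x_i)|$; since $f$ has constant sign on $I_i$, one has $a_i=\int_{I_i}|f|\,dx$ and $\sum_i a_i=\|f\|_{L^1}$. On each $I_i$ the function $F$ is monotone, so the change of variables $u=F(x)$, $du=f(x)\,dx$ together with $|f|\leq\|f\|_\infty$ gives
\[
\int_{I_i}|F-c|\,dx\;\geq\;\frac{1}{\|f\|_\infty}\int_{J_i}|u-c|\,du,
\]
where $J_i$ is the interval of length $a_i$ with endpoints $F(x_i)$ and $F(x_{i+1})$. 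Summing in $i$,
\[
\|F-c\|_{L^1(\mathbb{T})}\;\geq\;\frac{1}{\|f\|_\infty}\int_\mathbb{R}|u-c|\,\nu(u)\,du,\qquad \nu(u):=\#\{i:u\in J_i\},
\]
and the two structural consequences of the arc decomposition are the pointwise bound $\nu\leq N$ and the total-mass identity $\int_\mathbb{R}\nu(u)\,du=\|f\|_{L^1}$.

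The proof is then closed by a one-line rearrangement: for any nonnegative $\nu$ on $\mathbb{R}$ with $\nu\leq N$ and $\int\nu=\|f\|_{L^1}$, and any $c\in\mathbb{R}$, the quantity $\int|u-c|\,\nu(u)\,du$ is minimized by $\nu=N\,\mathbf{1}_{[c-\rho,\,c+\rho]}$ with $\rho=\|f\|_{L^1}/(2N)$, producing $N\rho^2=\|f\|_{L^1}^2/(4N)$; any other admissible $\nu$ must place mass farther from $c$ and so only enlarges the integral. Chaining the four ingredients gives the stated inequality with an explicit absolute constant. The substantive step is the change of variables: it is what converts the combinatorial hypothesis ``$f$ has $N$ sign changes'' into the analytic pointwise bound $\nu\leq N$, and it uses both the Lipschitz bound $|F'|\leq\|f\|_\infty$ (a uniform lower bound on the Jacobian $dx/du$) and the monotonicity of $F$ between consecutive sign changes (making the substitution univalent on each $I_i$). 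This is also the step I expect to require genuine rethinking if one tried to push the result to higher dimensions, where a ``root'' becomes codimension-one and the coarea formula replaces the counting function $\nu$ by a genuine measure.
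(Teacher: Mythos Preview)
Your argument is correct and takes a genuinely different route from the paper's. The paper works directly with the Wasserstein distance as an intermediate object: it invokes Peyre's inequality $W_1 \leq W_2 \lesssim \|\cdot\|_{\dot H^{-1}}$ to obtain the upper bound $W_1(f\,dx,\overline{f}\,dx)\lesssim(\sum_k |\widehat f(k)|^2/k^2)^{1/2}$, and then bounds $W_1$ from below by a mass--times--distance argument on each interval between consecutive solutions of $f=\overline f$ (a fixed fraction of the excess mass sits at distance $\gtrsim \|f-\overline f\|_{L^1(J)}/\|f\|_{L^\infty}$ from $\partial J$), finishing with Cauchy--Schwarz over the intervals. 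You instead pass immediately to the antiderivative $F$, use Parseval to identify the Fourier sum with $\|F-\overline F\|_{L^2}$, and replace the transport lower bound by the change of variables $u=F(x)$ on each monotone arc together with a bathtub/rearrangement step.

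What each buys: your proof is more elementary and self-contained---it avoids Peyre's estimate and the Wasserstein machinery entirely, and in fact answers the question the paper itself raises just after the statement (``Is there a more elementary proof not passing through the notion of Wasserstein distance?''). The paper's route, on the other hand, isolates the estimate
\[
W_1(f\,dx,\overline f\,dx)\;\gtrsim\;\frac{\|f-\overline f\|_{L^1}^2}{\|f\|_{L^\infty}}\cdot\frac{1}{\#\{f=\overline f\}}
\]
as a statement of independent interest about transport cost, and this formulation is what the author later lifts to two dimensions (replacing the root count by $\mathcal H^1\{f=g\}$). Your own closing remark about the coarea formula points in exactly that direction, but the paper's version makes the extension more transparent because the transport picture survives while the one-dimensional antiderivative does not.
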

Setting $f(x) =  \sin{(n x)}$ shows that the inequality is sharp up to constants.
We believe it to be quite curious -- it can be interpreted as a quantitative Sturm oscillation estimate and we believe that it raises a series of natural questions: to which extent can similar results be proven in a more general setting? Are the $L^p-$norms on the right-hand side optimal? Is there a more elementary proof not passing through the notion of Wasserstein distance? We observe that the result improves on the classical
result for functions of the type, say,
$$ f(x) = \varepsilon \sin{(nx)} + \sin{(n^2 x)}.$$
The classical Sturm Oscillation Theorem can only deduce the existence of at least $n$ roots while Theorem 5 implies the existence of $\gtrsim n^2/(1+\varepsilon^2 n^2)^{1/2}$ roots which is better than the classical bound as soon as $\varepsilon \lesssim n^{-1}$.
We observe that there is a natural corollary for differentiable functions that bypasses the notion of both Fourier series or Sobolev spaces with negative indices. 
\begin{corollary} Let $f:\mathbb{T} \rightarrow \mathbb{R}$ be continuously differentiable with mean value $\overline{f}$. Then
$$ (\emph{number of critical points of}~f) \cdot \|f- \overline{f}\|_{L^2(\mathbb{T})}  \gtrsim \frac{\|f'\|^{2}_{L^1(\mathbb{T})}}{\|f'\|_{L^{\infty}(\mathbb{T})}}.$$
\end{corollary}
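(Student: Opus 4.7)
The corollary is essentially Theorem 5 applied to $f'$, with a Parseval step to convert the Sobolev-type sum to an $L^2$ norm of $f - \overline{f}$.

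\textbf{Step 1.} Observe that since $f$ is continuously differentiable and periodic, the derivative $f' : \mathbb{T} \to \mathbb{R}$ is continuous and has mean value
\[
\int_\mathbb{T} f'(x)\, dx = 0,
\]
so $f'$ satisfies the hypotheses of Theorem 5. Moreover, the critical points of $f$ are exactly the zeros of $f'$, so the "number of roots" appearing on the left-hand side of Theorem 5 coincides with the "number of critical points of $f$" appearing in the corollary.

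\textbf{Step 2.} Apply Theorem 5 to $f'$ to obtain
\[
(\text{number of critical points of } f)\cdot\left(\sum_{k=1}^{\infty} \frac{|\widehat{f'}(k)|^2}{k^2}\right)^{1/2} \gtrsim \frac{\|f'\|_{L^1(\mathbb{T})}^2}{\|f'\|_{L^{\infty}(\mathbb{T})}}.
\]

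\textbf{Step 3.} Use the differentiation identity $\widehat{f'}(k) = ik\,\widehat{f}(k)$ to simplify the Fourier sum:
\[
\sum_{k=1}^{\infty}\frac{|\widehat{f'}(k)|^2}{k^2} = \sum_{k=1}^{\infty} |\widehat{f}(k)|^2.
\]
Since $f$ is real-valued, $|\widehat{f}(-k)| = |\widehat{f}(k)|$, and the $k=0$ Fourier coefficient of $f - \overline{f}$ vanishes, so Parseval's identity yields
\[
\sum_{k=1}^{\infty}|\widehat{f}(k)|^2 = \tfrac{1}{2}\sum_{k\neq 0}|\widehat{f}(k)|^2 \sim \|f-\overline{f}\|_{L^2(\mathbb{T})}^2.
\]
Substituting this into the conclusion of Step 2 gives exactly the stated inequality.

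\textbf{Main obstacle.} There is essentially no obstacle — this is a direct corollary. The only things to be careful about are: (i) verifying that the hypothesis "mean value $0$" of Theorem 5 is indeed satisfied automatically by $f'$ (which is immediate from periodicity), (ii) that "roots of $f'$" really does count the critical points as defined (a matter of definition), and (iii) that the normalization constants hidden in $\sim$ on the torus are absolute, so that the $\gtrsim$ in Theorem 5 transfers cleanly through the Parseval identification. No new analytic ideas beyond Theorem 5 are required.
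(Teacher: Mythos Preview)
Your proof is correct and matches the paper's own argument exactly: the paper simply states that the corollary ``follows immediately by applying Theorem 4 to $f'$,'' which is precisely your Steps 1--3 (with the Parseval identification left implicit).
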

The corollary follows immediately by applying Theorem 4 to $f'$ and  $f(x) =  \sin{(n x)}$ again shows the inequality to be sharp. It seems again to capture a quite fundamental
idea: if the absolute value of the derivative happens to be large on a set of large measure, then either the function keeps going up (making the $L^2-$norm big) or the derivative
is alternating quite a bit between positive and negative values (creating critical points). It would again be desirable to have a simple proof avoiding the Wasserstein distance altogether
and it would be fascinating to understand whether the inequality (or rather, an inequality encapsulating the same principle) exists in higher dimensions.
However, even the one-dimensional case is nontrivial.
\begin{quote} \textbf{Question.} For which triples $1 \leq p,q,r \leq \infty$ is there an estimate
$$ (\mbox{number of roots of}~f) \cdot \left( \sum_{k=1}^{\infty}{ \frac{ |\widehat{f}(k)|^2}{k^2}}\right)^{\frac{1}{2}}  \gtrsim \frac{\|f\|^{}_{L^p(\mathbb{T})} \|f\|^{}_{L^q(\mathbb{T})} }{\|f\|_{L^{r}(\mathbb{T})}}?$$
\end{quote}
The proof of Theorem 4 actually shows a slightly stronger result
$$ \# \left\{x \in \mathbb{T}: f(x) = \overline{f} \right\} W_1(f(x)dx, \overline{f} dx) \gtrsim \frac{\|f-\overline{f}\|_{L^1}^2}{\|f\|_{L^{\infty}}},$$
where $\overline{f}$ is the average of $f$.
The author recently established \cite{steini} a variant of Theorem 4 in two dimensions:  if
$f(x)dx$ and $g(x)dx$ are two absolutely continuous measures on a two-dimensional domain $M$ with continuous densities and the same total mass, then, for all $1 \leq p <\infty$,
$$ W_p(f(x)dx, g(x) dx) \cdot \mathcal{H}^1 \left\{x \in M: f(x) = g(x) \right\} \gtrsim_{M,p} \frac{\|f-g\|_{L^1(M)}^{1+1/p}}{\|f-g\|_{L^{\infty}(M)}},$$
where $\mathcal{H}^1$ is the one-dimensional Hausdorff measure. This turns out to be an essential ingredient in establishing a metric analogue of classical Sturm-Liouville theory in two dimensions. 

\subsection{Laplacian eigenfunctions and Applications.} Throughout this section, let $(M,g)$ be a smooth, compact Riemannian manifold without boundary and let $(\phi_k)_{k=0}^{\infty}$
denote the sequence of $L^2-$normalized Laplacian eigenfunctions. These objects are of fundamental importance in mathematical physics because they
diagonalize the Laplacian. Ever since the seminal 1912 paper of Weyl \cite{weyl}, they are known to interact deeply with the local and global geometry of the manifold
(see \cite{xiu} for a recent fascinating mystery). We refer to a recent book of Zelditch \cite{zeld2} for an overview.
This section discusses a new property of these functions. Any solution of $-\Delta_g \phi = \lambda \phi$ is known to oscillate on scale $\sim \lambda^{-1/2}$
(also known as the wavelength). Can the positive part $\phi_{+} = \max\left\{\phi,0\right\}$ be moved to the negative part $\phi_{-} = \max\left\{-\phi,0\right\}$ by essentially
moving the entire $L^1-$mass by roughly one wavelength?

\begin{quote}
\textbf{Conjecture.} Is it true that if $-\Delta \phi = \lambda \phi$ on $(M,g)$, then
$$ W_p\left(\max\left\{\phi_{\lambda}, 0\right\}dx, \max\left\{-\phi_{\lambda}, 0\right\} dx\right) \sim_{p, (M,g)} \frac{1}{\sqrt{\lambda}} \|\phi\|^{\frac{1}{p}}_{L^1(M)}?$$
\end{quote}
We remark that the conjecture is two-sided and claims both upper and lower bounds.
This question seems already nontrivial on $\mathbb{T}^2$: it may be easily verified for $\cos{ \left( \left\langle k, x\right\rangle\right)}$ (where $k \in \mathbb{Z}^2$) but the eigenvalues of $-\Delta_{\mathbb{T}^2}$ can
have arbitrarily large multiplicity and it seems nontrivial to establish the result for linear combinations. The same phenomenon occurs on $\mathbb{S}^2$. It seems conceivable
that these cases already encapsulate all typical obstructions and that a proof on either $\mathbb{S}^2$ or $\mathbb{T}^2$ could illustrate the general case.
 We prove a result that makes the upper bound seem likely.

\begin{theorem} \label{eig} 
Let $(M,g)$ be a compact Riemannian manifold and $\partial M = \emptyset$. 
 If $1 \leq p < \infty$ and $-\Delta_g \phi = \lambda \phi$ on $M$, then
$$ W_p\left(\max\left\{\phi_{\lambda}, 0\right\}dx, \max\left\{-\phi_{\lambda}, 0\right\} dx\right) \lesssim_{p,(M,g)} \sqrt{\frac{ \log{\lambda}}{\lambda}} \|\phi\|^{\frac{1}{p}}_{L^1(M)}.$$
Moreover,  if $f:M \rightarrow \mathbb{R}$ can be written as
$$ f = \sum_{\lambda_k \geq \lambda}{ \left\langle f, \phi_k \right\rangle \phi_k},$$
then
$$  W_p\left(\max\left\{f(x), 0\right\}dx, \max\left\{-f(x), 0\right\} dx\right)  \lesssim_{p, (M,g)}  \frac{1}{\sqrt{\lambda}}   \sqrt{   \log\left(  \lambda \frac{\|f\|_{L^2(M)}}{\|f\|_{L^1(M)}}   \right)} \|f\|^{\frac{1}{p}}_{L^1}.$$
\end{theorem}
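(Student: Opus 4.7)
The plan is to use the heat semigroup $e^{t\Delta_g}$ as an explicit transport and then optimize over $t$. For any nonnegative measure $\mu$ on $M$ of total mass $m$, the heat kernel itself furnishes a coupling of $\mu$ and $e^{t\Delta}\mu$ via $d\gamma(x,y) = p_t(x,y)\,d\mu(x)\,dy$; Li--Yau/Davies Gaussian upper bounds give $\int d(x,y)^p p_t(x,y)\,dy \lesssim_{p,M} t^{p/2}$ for small $t$, so
$$ W_p(\mu, e^{t\Delta}\mu) \lesssim_{p,M} \sqrt{t}\, m^{1/p}. $$
Since $-\Delta_g \phi_\lambda = \lambda \phi_\lambda$, the semigroup acts as a scalar on $\phi_\lambda$, giving $e^{t\Delta}\phi_+ - e^{t\Delta}\phi_- = e^{-\lambda t}\phi_\lambda$. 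Because both $e^{t\Delta}\phi_\pm$ are non-negative with equal total mass, the crude bound (keep the common mass $\min(e^{t\Delta}\phi_+, e^{t\Delta}\phi_-)$ in place, move the residual a distance at most $\diam(M)$) gives
$$ W_p(e^{t\Delta}\phi_+, e^{t\Delta}\phi_-) \leq \diam(M)\,\bigl(e^{-\lambda t}\|\phi_\lambda\|_{L^1}/2\bigr)^{1/p}. $$

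Combining these through $W_p(\phi_+,\phi_-) \leq 2 W_p(\phi_+, e^{t\Delta}\phi_+) + W_p(e^{t\Delta}\phi_+, e^{t\Delta}\phi_-)$ yields
$$ W_p(\phi_+,\phi_-) \lesssim_{p,M} \sqrt{t}\,\|\phi_\lambda\|_{L^1}^{1/p} + e^{-\lambda t/p}\,\|\phi_\lambda\|_{L^1}^{1/p}, $$
and the choice $t = p\log\lambda/(2\lambda)$ brings the second term to $\lambda^{-1/2}\|\phi_\lambda\|_{L^1}^{1/p}$ and the first to $\sqrt{p/2}\sqrt{\log\lambda/\lambda}\,\|\phi_\lambda\|_{L^1}^{1/p}$, proving the first assertion.

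For the second assertion, $e^{t\Delta}f$ is no longer a scalar multiple of $f$, but the spectral hypothesis together with Cauchy--Schwarz gives $\|e^{t\Delta}f\|_{L^1} \leq \vol(M)^{1/2}\|e^{t\Delta}f\|_{L^2} \leq \vol(M)^{1/2}e^{-\lambda t}\|f\|_{L^2}$. The same three-step argument then produces
$$ W_p(f_+,f_-) \lesssim_{p,M} \sqrt{t}\,\|f\|_{L^1}^{1/p} + \bigl(e^{-\lambda t}\|f\|_{L^2}\bigr)^{1/p}, $$
and setting $A := \|f\|_{L^2}/\|f\|_{L^1}$ I would take $t = (p\log\lambda + \log A)/\lambda$, so that $e^{-\lambda t}\|f\|_{L^2} \lesssim \lambda^{-p}\|f\|_{L^1}$ and the second term is at most $C\lambda^{-1}\|f\|_{L^1}^{1/p}$, while the first is $\lesssim_p \sqrt{\log(\lambda A)/\lambda}\,\|f\|_{L^1}^{1/p}$.

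The main obstacle is this last optimization: the more natural choice $t = p\log(\lambda A)/(2\lambda)$ introduces a stray factor $A^{1/p-1/2}$ on the second term which is only controlled by the logarithm when $p \geq 2$. Enlarging $t$ (and paying a constant $\sqrt{p}$ on the first term) removes this asymmetry and handles $p < 2$; the asymmetric exponent is exactly why the bound records $\|f\|_{L^1}^{1/p}$ outside but $\|f\|_{L^2}/\|f\|_{L^1}$ inside the logarithm. A secondary technicality is the heat-kernel moment bound used in the first step, but it reduces to the standard Gaussian upper bounds once one restricts to times $t$ bounded in terms of the geometry of $M$, which is consistent with $t \sim \log\lambda/\lambda \to 0$.
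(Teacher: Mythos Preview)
Your argument is correct and follows essentially the same route as the paper's proof: both use the heat kernel $p_t(x,y)$ as an explicit coupling to move $\phi_+$ and $\phi_-$ (separately) at $W_p$-cost $\lesssim \sqrt{t}\,\|\phi\|_{L^1}^{1/p}$ via Gaussian heat-kernel bounds (the paper cites Aronson, you cite Li--Yau/Davies), then bound the residual $W_p(e^{t\Delta}\phi_+, e^{t\Delta}\phi_-)$ by the crude diameter estimate applied to the difference $e^{t\Delta}\phi = e^{-\lambda t}\phi$, and finally optimize $t \sim \log\lambda/\lambda$. For the general high-frequency case the paper likewise passes through Cauchy--Schwarz to get $\|e^{t\Delta}f\|_{L^1} \lesssim e^{-\lambda t}\|f\|_{L^2}$ and optimizes; your choice of $t$ differs from the paper's by a $p$-dependent factor but leads to the same bound.
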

This establishes one direction of the conjecture up to a logarithmic factor. Unfortunately, the argument does not seem to be able to
produce sharper results and different techniques might be needed. The second part of the statement, adapting a technique from \cite{steinos}, shows that all 'high-frequency' functions
have positive and negative mass cancelling out with little work, this seems like a fairly fundamental principle; it is not clear to us whether
the logarithmic factor is necessary.\\

The conjectured lower bound in Theorem 5 may be within reach in the special case of two-dimensional manifolds for certain values of $p$. The relevant question that needs to be answered seems to be the following: suppose $-\Delta \phi = \lambda \phi$ on a two-dimensional manifold $M$ and let $A = \left\{x \in M: \phi(x) = 0\right\}$ be the set where the eigenfunction vanishes, is then necessarily a fixed amount of the $L^p-$mass far away from the roots? More precisely, do we have an estimate of the type
$$ \| \phi \|_{L^p\left(\left\{x \in M: d(x, A) \geq \delta\cdot \lambda^{-1/2}\right\}\right)} \gtrsim \| \phi\|_{L^p(M)}?$$
An even stronger statement would be the following: if $\Omega \subset \mathbb{R}^2$ and $\phi$ is the first non-trivial eigenfunction of the Dirichlet-Laplacian on $\Omega$, i.e. $-\Delta \phi = \lambda_1 \phi$, is it true that
$$ \| \phi \|_{L^p\left(\left\{x \in M: d(x, A) \geq \delta\cdot \lambda_1^{-1/2}\right\}\right)} \gtrsim \| \phi\|_{L^p(M)}?$$
This estimate can only hold in two dimension due to a well known two-dimensional rigidity phenomenon. Existing results imply that the estimate holds for $p=\infty$ (see \cite{georg, lieb, lierl,manas}).

\section{Peyre's argument and a smoothing procedure}
\subsection{Peyre's argument.}
 We quickly describe Peyre's argument; while presentation has been slightly changed, all the content is from \cite{peyre}.
The result can be interpreted as a non-asymptotic connection between the linearized $W_2$ distance and $\dot H^{-1}$ (see, for example,  Otto \& Villani \cite{otto} or the textbook of
Villani \cite[\S 7.6]{villani}): if $\mu$ is a measure and $d\mu$ is an infinitesimally small perturbation, then
$$ W_2(\mu, \mu + d\mu) = \| d\mu\|_{\dot H^{-1}(\mu)} + o(\left\| d\mu \right\|).$$
We recall the definition of the weighted $\dot H^1-$norm 
$$ \|g \|_{\dot H^1(\mu)} := \left(\int_{\mathbb{T}}{ |\nabla g|^2 d\mu}\right)^{\frac{1}{2}}$$
which then defines $\dot H^{-1}(\mu)$ via duality
$$ \| h\|_{\dot H^{-1}(\mu)} := \sup \left\{ \left\langle g, h\right\rangle:  \| g\|_{\dot H^1(\mu)} \leq 1\right\}.$$
If $\mu_t$ is now a one-parameter family interpolating between $\mu_0 = \mu$ and $\mu_1 = \nu$, then by the Benamou-Brenier formula (see Section 2.2),
$$ W_2(\mu, \nu) \leq \int_{0}^{1}{\left\| \frac{\partial\mu_t}{\partial t}\right\|_{\dot H^{-1}(\mu_t)}} dt$$
and the Benamou-Brenier formula \cite{ben} (also discussed in the textbook of Villani \cite[\S 8.1]{villani}) implies that taking the infimum over all possible curves actually coincides with the $W_2$ distance (we will not use this fact).
We use the bound for the choice of curve $\mu_t = (1-t) f(x)dx + t \overline{f} dx$, where
 $f$ is the density of the absolutely continuous probability measure $\mu$ and $\overline{f}$ denotes the average value of $f$ 
$$ W_2(f(x)dx, \overline{f} dx) \leq \int_{0}^{1}{ \|  f(x) -  \overline{f}\|_{\dot H^{-1}(\mu_t)} dt}.$$
We observe that $\mu_t \geq t \overline{f} dx$ and thus, for any $g$,
$$ \| g\|_{\dot H^1(\mu_t)} := \left(\int_{\mathbb{T}}{ |\nabla g|^2 d\mu}\right)^{\frac{1}{2}} \geq  \left(\overline{f} t \int_{\mathbb{T}}{ |\nabla g|^2 dx}\right)^{\frac{1}{2}} = \sqrt{\overline{f} t} \|g\|_{\dot H^1(dx)}$$
which, by duality, then implies
$$ \| f  - \overline{f} \|_{\dot H^{-1}(\mu_t)} \leq \frac{1}{\sqrt{\overline{f} t} } \| f  - \overline{f} \|_{\dot H^{-1}(dx)} =  \frac{1}{\sqrt{\overline{f} t} } \| f  - \overline{f} \|_{\dot H^{-1}(dx)} .$$
This implies
\begin{align*}
 W_2(f(x)dx, \overline{f} dx)& \leq \int_{0}^{1}{ \| f - \overline{f}\|_{\dot H^{-1}(\mu_t)} dt} \leq    \|  f \|_{\dot H^{-1}(dx)} \int_{0}^{1}{ \frac{ dt}{\sqrt{\overline{f} t }}} = 2\frac{\|  f \|_{\dot H^{-1}}}{ \sqrt{\overline{f}}} =  2\frac{\|  f \|_{\dot H^{-1}(dx)}}{ \|f\|_{L^1}^{1/2}}.
\end{align*}

\subsection{A Smoothing Procedure.} The purpose of this section is to explain how a basic smoothing procedure implies that it is always possible to introduce a free parameter as in the Erd\H{o}s-Turan type inequality. It also shows that in most cases it will be sufficient to consider trigonometric polynomials (as long as the argument does not depend on their degree).
This approach is universal and not limited to any particular Theorem; indeed, it will be implicitly assumed in most subsequent arguments.
Let $f:\mathbb{T}^d \rightarrow \mathbb{R}_{\geq 0}$ be given and assume its Fourier series is
$$ f(x) = \sum_{k \in \mathbb{Z}^d}{\widehat{f}(k) e^{2\pi i \left\langle k, x \right\rangle}}.$$
As we do throughout the paper, we use $\overline{f} = \widehat{f}(0)$ to denote the mean value of the function.
The main idea is to apply the heat equation to smooth the function and then interpret the outcome $g$ of this smoothing process as the result of applying a particular transport plan moving $f$ to $g$ (we refer to the beginning of \S 9 where this argument is described in greater detail). If we can bound the $p-$Wasserstein cost of moving $f$ to $g$, we may use the triangle inequality to estimate
$$ W_p(f dx,\overline{f} dx) \leq W_p(f dx,g dx) + W_p(g dx,\overline{f}dx).$$
The symbol $e^{t\Delta}f$ is used to denote the solution of the heat equation after $t$ units of time (since we work on $\mathbb{T}^d$, we
do not need to specify boundary conditions). We state the result of this section as a Lemma.

\begin{lem} Given a nonnegative function $f:\mathbb{T}^d \rightarrow \mathbb{R}_{\geq 0}$ with mean value $\overline{f}$, we have, for all $1 \leq p < \infty$ and a constant $c_{d,p} > 0$
$$ W_p(f dx,\overline{f}dx)  \lesssim_p \frac{\|f\|_{L^1}^{\frac{1}{p}}}{n} + W_p\left( \sum_{  \|k\| \leq c_{d,p} n \log{n}}{\widehat{f}(k) e^{-\|k\|^2/n^2} e^{2\pi i \left\langle k, x \right\rangle}}, \overline{f}dx\right).$$
\end{lem}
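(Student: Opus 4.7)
The plan is to take the heat semigroup as the smoothing map and then truncate its Fourier expansion, stitching everything together with the triangle inequality. Specifically, I would set $t = 1/n^2$ (up to an absolute constant depending on the Fourier normalization) and $g = e^{t\Delta}f$, so that $\widehat{g}(k) = \widehat{f}(k)\,e^{-\|k\|^2/n^2}$, and define $g_1$ to be the truncation of the Fourier series of $g$ to frequencies $\|k\|\leq c_{d,p}\, n\log n$. Then
$$ W_p(f\,dx, \overline{f}\,dx) \leq W_p(f\,dx, g\,dx) + W_p(g\,dx, g_1\,dx) + W_p(g_1\,dx, \overline{f}\,dx),$$
and since the last term is already the right-hand side of the claimed inequality, it remains to bound each of the first two terms by $\|f\|_{L^1}^{1/p}/n$.

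For the first term I would use the heat kernel $p_t$ itself as an explicit coupling: the measure $d\gamma(x,y) = f(y)\,p_t(x-y)\,dx\,dy$ has marginals $f\,dx$ and $(e^{t\Delta}f)\,dx$, and its transportation cost is
$$ \int_{\mathbb{T}^d\times\mathbb{T}^d}|x-y|^p f(y)\,p_t(x-y)\,dx\,dy \;=\; \|f\|_{L^1}\!\int_{\mathbb{T}^d}|z|^p p_t(z)\,dz \;\lesssim_p\; \|f\|_{L^1}\, t^{p/2},$$
using that the $p$-th moment of the heat kernel at time $t$ is $\sim_p t^{p/2}$. Taking $p$-th roots with $t=1/n^2$ yields $W_p(f\,dx, g\,dx) \lesssim_p \|f\|_{L^1}^{1/p}/n$.

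For the second term the point is that the Gaussian weight makes the discarded high-frequency tail pointwise tiny. Using $|\widehat{f}(k)|\leq \|f\|_{L^1}$,
$$ \|g-g_1\|_{L^\infty} \leq \|f\|_{L^1}\sum_{\|k\|> c_{d,p}n\log n} e^{-\|k\|^2/n^2},$$
and comparison with a Gaussian integral shows that, if $c_{d,p}$ is chosen large enough depending on $d$ and $p$, both $\|g-g_1\|_{L^\infty}$ and therefore $\|g-g_1\|_{L^1}$ are bounded by $\|f\|_{L^1}\,n^{-p}$ up to a constant. Since $g$ and $g_1$ have the same mean $\overline{f}$, the signed measure $(g-g_1)\,dx$ integrates to zero; picking up its positive part and distributing it onto its negative part costs at most $\diam(\mathbb{T}^d)$ per unit of transported mass, giving $W_p(g\,dx,g_1\,dx) \leq \diam(\mathbb{T}^d)\cdot \|g-g_1\|_{L^1}^{1/p}\lesssim_{d,p}\|f\|_{L^1}^{1/p}/n$.

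The main obstacle I expect is bookkeeping rather than analytic: $g_1$ need not be nonnegative, so the intermediate Wasserstein distances must be interpreted in the natural signed-measure extension of $W_p$ (in which one couples the positive and negative parts of the difference of two equal-mass signed measures). Once this convention is fixed, the heat-kernel transport bound and the Gaussian tail estimate above give the lemma, with the constant $c_{d,p}$ emerging from requiring the tail $\sum_{\|k\|>c_{d,p}n\log n}e^{-\|k\|^2/n^2}$ to decay faster than $n^{-p}$.
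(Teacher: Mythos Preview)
Your proposal is correct and follows essentially the same route as the paper: smooth by the heat semigroup, use the heat kernel itself as a coupling to bound $W_p(f,g)$ by $\|f\|_{L^1}^{1/p}t^{1/2}$, then truncate the Fourier expansion and control the discarded tail via $|\widehat f(k)|\le\|f\|_{L^1}$ together with the Gaussian decay, choosing the cutoff so that the tail is $\lesssim \|f\|_{L^1}\,t^{p/2}$. Your explicit flagging of the signed-measure interpretation for the intermediate $W_p(g,g_1)$ step is a point the paper leaves implicit.
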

We see that we can make the first term as small as we wish by making $n$ sufficiently large; in particular, understanding the behavior of Fourier series truncated at a certain degree is sufficient (as long as the estimates do not depend on the degree of the trigonometric polynomial).

\begin{proof}
We recall that the solution of the heat equation can be written in two different ways: the first way is on the Fourier side
$$ e^{t\Delta}f(x) = \sum_{k \in \mathbb{Z}^d}{\widehat{f}(k) e^{-\|k\|^2 t} e^{2\pi i \left\langle k, x \right\rangle}}$$
while, essentially by linearity of the heat equation,
$$ e^{t\Delta}f(x)  = \left( e^{t\Delta} \delta_0\right) * f,$$
where $e^{t\Delta}\delta_0$ is the heat kernel. It looks, for small $t$, essentially like a Gaussian at scale $\sim \sqrt{t}$ (and converges in profile as $t \rightarrow 0^+$). In particular, the heat kernel on the spatial side behaves much better than would be required for our purposes here. We set $g = e^{t\Delta}f$ and start by estimating $W_p( f,g)$. This is fairly easy by using homogeneity of space and the definition of the heat flow via convolution
$$ W_p(fdx,gdx)^p \leq  \|f\|_{L^1(\mathbb{T}^d)} \int_{\mathbb{T}^d}{ \|x\|^p  \left[ e^{t\Delta} \delta_0\right](x) dx} \lesssim_{p,d} \| f\|_{L^1(\mathbb{T}^d)} t^{\frac{p}{2}}$$
The next step consists of estimating $W_p(g dx, \overline{f} dx)$.
 We set $g = g_1 + g_2$, where
$$ g_1 =  \sum_{k \in \mathbb{Z}^d \atop  \|k\| \leq X}{\widehat{f}(k) e^{-\|k\|^2 t} e^{2\pi i \left\langle k, x \right\rangle}} \quad \mbox{and} 
\quad g_2 =  \sum_{k \in \mathbb{Z}^d \atop  \|k\| > X}{\widehat{f}(k) e^{-\|k\|^2 t} e^{2\pi i \left\langle k, x \right\rangle}},$$
where $X$ is a quantity to be determined further below. We recall the trivial estimate
$$ W_p^p(f(x)dx, \overline{f} dx) \leq \mbox{diam}(M) \|f - \overline{f}\|_{L^1}$$
to argue that
\begin{align*}
W_p(g dx,\overline{f} dx) &\leq W_p(g dx, g_1 dx) + W_p(g_1 dx, \overline{f} dx)\\
&\lesssim_d W_p(g_1 dx, \overline{f} dx) + \|g_2\|_{L^1}.
\end{align*}
We therefore need to estimate the $L^1-$norm of the high-frequency function
\begin{align*}
\left\| \sum_{k \in \mathbb{Z}^d \atop \|k\| \geq X}{\widehat{f}(k) e^{-\|k\|^2 t} e^{2\pi i \left\langle k, x \right\rangle}} \right\|_{L^1(\mathbb{T}^d)} &\leq \left(\sup_{k \in \mathbb{Z}^d}{|\widehat{f}(k)|}\right) \sum_{\|k\| \geq X}{  e^{-\|k\|^2 t} } \\
&\lesssim_d \|f\|_{L^1(\mathbb{T}^d)}  \int_{X}^{\infty}{ e^{-t r^2} r^{d-1} dr}.
\end{align*}
This is a second kind of error that is being introduced: we want to choose $X$ as small as possible while still dominating the first kind of error, $\|f\|_{L^1} t^{p/2}$,  that we already introduced.
We will now show that for
$$  X = c_{d,p} \frac{\log{(1/t)}}{\sqrt{t}} \qquad \mbox{we have} \qquad \int_{X}^{\infty}{ e^{-t r^2} r^{d-1} dr} \lesssim t^{\frac{p}{2}}.$$
A simple substitution shows that
$$  \int_{X}^{\infty}{ e^{-t r^2} r^{d-1} dr} = t^{-\frac{d}{2}} \int_{X\sqrt{t}}^{\infty}{e^{-x^2} x^{d-1} dx}.$$
This integral is also known as the incomplete gamma function $\Gamma(d, X\sqrt{t})$ and we are interested in the regime $X\sqrt{t} \gg 1$. In this regime, there is a classical asymptotic (see e.g. Abramowitz \& Stegun \cite[\S 6.5.35]{abra}), valid for $a \gg 1$, stating that $\Gamma\left(d, a \right) \lesssim_{d} a^{d-\frac{1}{2}} e^{-a}$. Introducing a new variable $n = t^{-1/2}$ leads to the desired result. \end{proof}
A very similar argument can be carried out on general manifolds with eigenfunctions of the Laplacians replacing the trigonometric system; the argument is more or less analogous, some of the new ingredients that would be necessary (estimates on the decay of Green function) are used in \S 9.

\section{Proof of Theorem 1}
\begin{proof} Using the general smoothing procedure outlined in Section 4.2., it suffices to deal with perturbations of the form $d\mu = f(x)dx$ where
$$ f(x) = \sum_{k=1}^{N}{a_{k} \sin{(kx)} + b_{k} \cos{(kx)} }$$
as long as the argument does not depend on $N$. We note that we can assume that $f$ has mean value 0 since $f dx$ is a perturbation of probability measure.
We will also assume that $b_{k} = 0$ and deal with the sine series. The cosine series can be dealt with in the same way, the cases are combined using the triangle inequality. The bound is
derived from an explicit but nontrivial transport plan. This transport compiles a frequency-by-frequency statistics and then looks at the aggregate effect. It can be described as follows (see Fig. 4): we fix $x$ and determine mass transfer to $y$ as follows.

\begin{enumerate} 
\item We go through all the frequencies $a_k \sin{kx}$. If $a_k \sin{kx} > 0$, then we divide that positive mass into two equally
sized positive masses $(a_k/2)\sin{kx}$ and spread them \textit{non-uniformly} over the two adjacent valleys (see Fig. 4). 
\item More precisely, if $y$ is a point in an adjacent valley, then the amount of mass transported from $x$ to $y$ is set to be $ (a_k/4) \sin{(kx)} (-\sin{k y}) dy$.
This maps every frequency to 0.
\item Finally, we look at all possible pairs of points $(x,y)$ and study the total amount of mass $m_1$ that is being transported from $x$ to $y$ and the total amount of mass $m_2$ from $y$ to
$x$. Clearly, if both $m_1$ and $m_2$ are positive, then there is a bit of redundancy happening; we remedy it to by only transporting $|m_1 - m_2|$ wherever it needs to go (the direction is not important: the transportation cost is $|m_1 - m_2||x-y|^p$).
\end{enumerate}

The way the transport plan is set up, there is one favorable cancellation early on in the computation of the cost that simplifies matters. 
However, it is quite conceivable that an entirely different
transport plan could yield better results. 
We introduce, for the purpose of abbreviation, the set
$$c(x,y) = \left\{ k \in \mathbb{N}: \sin{(kx)}~\mbox{has exactly one root between}~x~\mbox{and}~y\right\}.$$
We compute the amount of mass moved from $x$ to $y$ and see that it is given by
$$ \sum_{ k \in c(x,y)}{ \frac{ a_k}{4}  k\sin{(kx)} (-\sin{(ky)}) 1_{a_k \sin{(kx)} > 0}} = \frac{1}{4} \sum_{ k \in c(x,y)}{ k |a_k \sin{(kx)} \sin{(ky)}| 1_{a_k \sin{(kx)} > 0}},$$
since $k \in c(x,y)$ implies that $\sin{(kx)}$ and $\sin{(ky)}$ have opposite sign. Likewise, the amount of mass
transported from $y$ to $x$ is given by
\begin{align*}
\frac{1}{4} \sum_{ k \in c(x,y)}{ ka_k \sin{(ky)} (-\sin{(kx)}) 1_{a_k \sin{(ky)} > 0}} &= \frac{1}{4}\sum_{ k \in c(x,y)}{k |a_k \sin{(kx)} \sin{(ky)}|  1_{a_k \sin{(ky)} > 0} } \\
&= \frac{1}{4}\sum_{k \in c(x,y)}{k |a_k \sin{(kx)} \sin{(ky)}|  1_{a_k \sin{(kx)} < 0} }.
\end{align*}
These two quantities, the total planned transport from $x$ to $y$ and the transport from $y$ to $x$, are nonnegative: however, we clearly do not actually have to transport their sum
but merely their difference (as described in step (3) above). The way the transport plan is set up, one of the absolute values of a term is multiplied with its signature,
and $|x| \sign(x) = x$ has the effect that one of the absolute value signs disappears, which introduces an oscillatory term.
The actual amount of mass $A$ being transported (from either $x$ to $y$ or from $y$ to $x$) is
\begin{align*} 
A &= \left|  \sum_{ k \in c(x,y)}{k |a_k \sin{(kx)} \sin{(ky)}| 1_{a_k \sin{(kx)} > 0}} -  \sum_{k \in c(x,y)}{ k|a_k \sin{(kx)} \sin{(ky)}|  1_{a_k \sin{(kx)} < 0} }\right| \\
&= \left|  \sum_{ k \in c(x,y)}{k |a_k| |\sin{(kx)}| |\sin{(ky)}| (1_{a_k \sin{(kx)} > 0} - 1_{a_k \sin{(kx)} < 0}) }\right| \\
&=  \left|  \sum_{ k \in c(x,y)}{k |a_k| |\sin{(kx)}| |\sin{(ky)}|  \sign\left(a_k \sin{(kx)}\right)} \right| =   \left|  \sum_{ k \in c(x,y)}{ a_k k \sin{(kx)} |\sin{(ky)}| } \right|.
\end{align*}

We use Cauchy-Schwarz to bound the $2p-$th power of the $p-$Wasserstein distance from above
\begin{align*}
 W_p^{p}(\mu ,\mu + f(x)dx)^2 &\leq \left(\int_{\mathbb{T}}{ \int_{\mathbb{T}}{  |x-y|^p \left|  \sum_{ k \in c(x,y)}{ a_k k\sin{(kx)} |\sin{(ky)}| } \right| dx} dy}\right)^2 \\
&\leq 4 \pi^2 \int_{\mathbb{T}}{ \int_{\mathbb{T}}{  |x-y|^{2p}  \left(  \sum_{ k \in c(x,y)}{ a_k k \sin{(kx)} |\sin{(ky)}| } \right)^2 dx} dy}
\end{align*}
We expand the double sum and obtain the upper bound $W_p^p(\mu,\mu + f dx)^2 \leq 4\pi^2 J$ where $J$ can be rewritten as
\begin{align*}
J= \sum_{k, \ell=1}^{N}{ a_k k a_{\ell} \ell \int_{\mathbb{T}}{ \sin{(k x)} \sin{(\ell x)} \int_{\mathbb{T}}{|x-y|^{2p}  |\sin{(ky)}| |\sin{(\ell y)}| 1_{k \in c(x,y)} 1_{\ell \in c(x,y)} dy dx}}}.
\end{align*}

\begin{center}
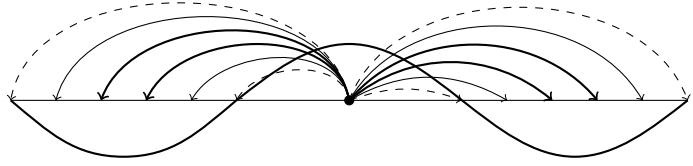
\begin{figure}[h!]
\begin{tikzpicture}[scale=1.5] \label{fig:transport}
\draw[thick] (5,0) sin (6,0.5) cos (7,0) sin (8,-0.5) cos (9,0);
\draw [thick] (3,0) to[out=320,in=180] (4,-0.5) to[out=0,in=220] (5,0);
\draw (3,0) -- (9,0);
\filldraw (6,0) circle (0.04cm);
\draw [dashed, ->] (6,0) to[out=20, in=160] (7,0);
\draw [->] (6,0) to[out=30, in=150] (7.4,0);
\draw [thick, ->] (6,0) to[out=40, in=140] (7.8,0);
\draw [thick, ->] (6,0) to[out=50, in=130] (8.2,0);
\draw [, ->] (6,0) to[out=60, in=120] (8.6,0);
\draw [dashed, ->] (6,0) to[out=70, in=110] (9,0);
\draw [dashed, ->] (6,0) to[out=100, in=60] (5,0);
\draw [->] (6,0) to[out=100, in=60] (4.6,0);
\draw [thick, ->] (6,0) to[out=100, in=65] (4.2,0);
\draw [thick, ->] (6,0) to[out=100, in=70] (3.8,0);
\draw [, ->] (6,0) to[out=100, in=75] (3.4,0);
\draw [dashed, ->] (6,0) to[out=100, in=80] (3,0);
\end{tikzpicture}
\caption{The transport instructions: every positive point-mass of a frequency is spread over the entire adjacent valleys (proportional
to the depth of the valley).} 
\end{figure}
\end{center}

We start by analyzing, for fixed $x \in \mathbb{T}$ and $k, \ell \in \mathbb{N}$, the inner integral
$$ J_2 = \int_{\mathbb{T}}{|x-y|^{2p}  |\sin{(ky)}| |\sin{(\ell y)}| 1_{k \in c(x,y)} 1_{\ell \in c(x,y)} dy }.$$
We assume from now on w.l.o.g. that $k \leq \ell$. For fixed $k,x$, the set $\left\{y: k \in c(x,y) \right\}$ is the union of two intervals of length $\pi/k$ that are distance $\pi/k$ apart. Likewise, the set $\left\{y: \ell \in c(x,y) \right\}$ is the union of two intervals of length $\pi/\ell$ that are distance $\pi/\ell$ apart. 
This implies that
$$ 1_{k \in c(x,y)} 1_{\ell \in c(x,y)} \equiv 0 \qquad \mbox{unless the distance of}~x,y~\mbox{to the nearest root of}~\sin{(k \cdot )}~\mbox{is}~\lesssim \frac{1}{\ell}.$$
This then implies
\begin{align*}
J_2 &\lesssim_p \frac{1}{\ell^{2p}}  \int_{\mathbb{T}}{ |\sin{(ky)}| 1_{k \in c(x,y)} 1_{\ell \in c(x,y)} dy }\lesssim \frac{1}{\ell^{2p}}  \frac{1}{\ell}  \left|\sin{\left(\frac{k}{\ell}\right)}\right| \lesssim \frac{k}{\ell^{2p + 2}}.
\end{align*}
We use this, again together with the fact that $x$ has to be within distance $\sim \ell^{-1}$ of a root of $\sin{(kx)}$ for there to be a nontrivial contribution, to estimate
\begin{align*}
 J &\lesssim \sum_{k, \ell=1\atop k \leq \ell}^{N}{k \ell |a_k|  |a_{\ell}|     \frac{k}{\ell^{2p + 2}}    \int_{\mathbb{T}}{ |\sin{(k x)}| 1_{d(x,{\tiny \mbox{root of}}~\sin{(kx)} \lesssim \ell^{-1})} dx}}\\
&\leq \sum_{k, \ell=1 \atop k \leq \ell}^{N}{ k \ell |a_k|  |a_{\ell}|   \frac{k}{\ell^{2p + 2}}   \frac{k}{\ell}  \left|\sin{\left(\frac{k}{\ell}\right)}\right| } \lesssim  \sum_{k, \ell=1}^{N}{ |a_k| |a_{\ell}| \frac{\min(k,l)^4}{\max(k,\ell)^{2p + 3}}  }.
\end{align*}
This can be estimated by
 $$ \sum_{k, \ell=1}^{N}{ |a_k| |a_{\ell}| \frac{\min(k,l)^4}{\max(k,\ell)^{2p + 3}}  } \lesssim  \sum_{k, \ell=1}^{N}{   \frac{1}{k+\ell}  \frac{ |a_k|}{k^{p-1}} \frac{|a_{\ell}|}{\ell^{p-1} }}$$
to which we apply Hilbert's inequality for sequences of real numbers $(c_k)_{k=1}^{\infty}$, $(d_k)_{k=1}^{\infty}$
$$ \sum_{k, \ell =1}^{\infty}{ \frac{ c_k d_{\ell}}{k + \ell}} \leq \pi \left( \sum_{k=1}^{\infty}{ c_k^2} \right)^{\frac{1}{2}} \left( \sum_{\ell=1}^{\infty}{ d_{\ell}^2} \right)^{\frac{1}{2}}$$
to obtain
$$ W_p^p(\mu,\mu + f(x)dx)^2  \lesssim  \sum_{k, \ell=1}^{N}{   \frac{1}{k+\ell}  \frac{ |a_k|}{k^{p-1}} \frac{|a_{\ell}|}{\ell^{p-1} }} \lesssim  \sum_{k=1}^{\infty}{ \frac{a_k^2}{k^{2p-2}}}.$$ 
The argument is identical for a pure cosine series. This shows that
$$  W_p^p(\mu,\mu + f(x)dx)^{2p} \lesssim_{p}   \sum_{k=1}^{\infty}{ \frac{a_k^2 + b_k^2}{k^{2p-2}}}$$
and therefore
$$  W_p(\mu,\mu + f(x)dx) \lesssim_{p} \left( \sum_{k=1}^{\infty}{ \frac{|\widehat{f}(k)|^2}{k^{2p-2}}} \right)^{\frac{1}{2p}}.$$
The full statement follows from combining this result with the argument from \S 4.2.
\end{proof}

\section{Proof of Theorem 2}
The main idea of the proof is as follows: given a mass distribution $f(x)dx$, it seems rather difficult to get any good estimates on
the Wasserstein distance from below. However, if we can identify a subregion of space where there is more mass than $\overline{f} dx$,
then we have some information: all the 'superfluous' mass has to be transported to a region outside. One would assume that this is
more or less the only restriction (except for 'not too mass is allowed to pile outside the region') and making this intuitive notion precise
(or showing it to be false) seems like a fascinating question that is instrumental to the development of stable lower bounds on Wasserstein distances.\\

Our argument makes use of the following elementary Lemma (versions and presumably stronger versions of which have surely been stated somewhere in the literature): it states that
in our setting we can approximate any transport plan by a piecewise linear function.
\begin{lem} Let $f: \mathbb{T} \rightarrow \mathbb{R}_{\geq 0}$ be given and have mean value $\overline{f}$. For any $\varepsilon > 0$, there exists a piecewise linear function $T:\mathbb{T} \rightarrow \mathbb{T}$ such that
$$ \int_{\mathbb{T}}{ |T(x) - x| f(x) dx} \leq W_1(f(x)dx, \overline{f} dx) + \varepsilon$$
and where, denoting the push-forward of the measure $f(x)dx$ under the mapping $T$ by $\nu$, we have $W_1(\nu, \overline{f} dx) \leq \varepsilon$.
\end{lem}
\begin{proof} Let $\gamma \in \Gamma(f(x)dx, \overline{f} dx)$ be a transport plan that is at most $\varepsilon/2$ more costly than the optimal transport cost $W_1(f(x)dx, \overline{f} dx)$. 
We subdivide $\mathbb{T}$ into $n$ intervals of length $2\pi/n$ and can then 'integrate out the marginals' or, more directly, determine how much mass the transport plan $\gamma$ moves from each of the intervals to each of the other intervals. Then, however, the construction is fairly straightforward. Suppose the interval $I_1$ has total mass $m_1$ and we would like to move mass $m_{1,j}$ from $I_1$ to interval $I_j$, where $1 \leq j \leq n$.
Clearly,
$$ m_1 = m_{1,1} + m_{1,2} + \dots, m_{1,n},$$
where
$$ m_{i,j} = \int_{I_i} \int_{I_j} d\gamma(x,y).$$
We now partition the interval $I_1$ into $n$ Intervals $I_{1,k}$ for $1 \leq k \leq n$ such that
$$ \int_{I_{1,k}}{f(x)dx} = m_{1,k}.$$
We note that this is always possible because $f(x)dx$ does is absolutely continuous. The piecewise linear map $T$ can then be defined by requiring that $T(I_{1,k}) = I_{k}$. The resulting pushforward measure has the correct amount of mass in every single interval $I_j$, the worst case is that we have to redistribute within the interval which has total $W_1-$cost at most $2 \pi \overline{f} 2 \pi n^{-1}$. Since $n$ was arbitrary, this can be made arbitrarily small.
\end{proof}

The proof gives slightly more information, it can be used to show that $T$ can actually be chosen to be piecewise linear on a finite number of intervals whose number depends only on $\varepsilon$. At first glance, this may look like a simplified form of the transport problem, however, problems in optimal transport are usually fairly easy to \textit{approximately} solve after discretizing space and turning it into a finite combinatorial matching problem. We do not believe the simplified map $T$ to yield much insight in general (conversely, our proof could have also bypassed $T$ by incorporating the construction in the proof above; we have found this way simpler).

\begin{proof}[Proof of Theorem 2]
 We assume that $f: \mathbb{T} \rightarrow \mathbb{R}_{\geq 0}$ is given and has mean value $\overline{f}$. Our goal is to obtain a lower bound on $W_1(f(x)dx, \overline{f} dx)$. Our general smoothing procedure allows us to assume w.l.o.g. that $f$ is given as 
$$ f(x) = \sum_{|k| \leq N}{\widehat{f}(k) e^{i k x}}$$
as long as our subsequent estimates do not depend on $N$: we can always take $N$ so large that $\|f\|_{L^1}/N \ll W_1(f(x)dx, \overline{f} dx)$ and then apply the smoothing procedure.
We will, mainly for simplicity of notation, construct a lower bound on the simple class of transport plans that is given by explicit functions $T:\mathbb{T} \rightarrow \mathbb{T}$ as opposed to the actual
set of measures on the product space -- this is justified by the Lemma above.
Assuming $T:\mathbb{T} \rightarrow \mathbb{T}$ is one of these specialized transport plans, the cost of transport to equilibrium is computed as
$$ \mbox{transport cost} =  \int_{\mathbb{T}}{ |T(x) - x| f(x) dx}.$$
Let now $t > 0$ be arbitrary; we can introduce an additional layer of averaging by writing
$$  \int_{\mathbb{T}}{ |T(x) - x| f(x) dx} =  \int_{\mathbb{T}}{  \frac{1}{t} \int_{x}^{x+t}{|T(y) - y| f(y) dy} dx}.$$
The next step is to obtain a lower bound on the inner integral assuming that, for $x$ fixed,
$$  \int_{x}^{x+t}{f(y) dy} \geq t \overline{f},$$
which merely states that there is too much mass inside the interval $[x, x+t]$. Clearly, it has to be transported out, and we now obtain a uniform bound on how cheaply that can be done, that is independent of the transport map $T$. Clearly, the best possible case is if all the superfluous mass is already as close to the boundary as possible: in that case, invoking the bound on the maximal density $\|f\|_{L^{\infty}}$ we obtain a lower bound for the local transportation cost
$$  \int_{x}^{x+t}{|T(y) - y| f(y) dy} \gtrsim \frac{1}{\|f\|_{L^{\infty}(\mathbb{T})}}  \left( \int_{x}^{x+t}{f(y) dy} - t \overline{f} \right)^2$$
as follows: the amount of 'superfluous' mass in the interval $[x,x+t]$ is given by
$$ m_{+} =  \int_{x}^{x+t}{f(y) dy} - t \overline{f}.$$
To minimize the cost of transporting it outside, we would assume it to be arranged as close to the boundary as possible. Meanwhile, there is
also a bound on the $L^{\infty}$ norm which forces it to be arranged on an interval of length at least $\sim m_{+}/\|f\|_{L^{\infty}}$. Transporting it outside then means that at least half of that mass has to be transported at least half of the length of that interval leading to
a lower bound of $\gtrsim m_{+}^2/\|f\|_{L^{\infty}}$ as was claimed.
\begin{center}
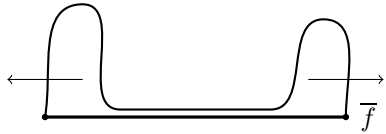
\begin{figure}[h!]
\begin{tikzpicture}
\draw [very thick] (5,0) -- (9,0);
\draw [thick] (5,0) to[out=80, in=180] (5.5,1.5) to[out=0, in=180] (6,0.1) to[out=0, in=180] (8,0.1) to[out=0, in=180] (8.7,1.3) to[out=0, in=90] (9,0);
\draw [->] (5.5, 0.5) -- (4.5, 0.5);
\draw [->] (8.5, 0.5) -- (9.5, 0.5);
\node at (9.3, 0) {$\overline{f}$};
\filldraw (5,0) circle (0.035cm);
\filldraw (9,0) circle (0.035cm);
\end{tikzpicture}
\caption{The interval $J$: bounding mass concentration near the boundary.}
\end{figure}
\end{center}
We recall that this inequality is conditional on the amount of mass in $[x,x+t]$ exceeding the average $\overline{f}$. By double-counting the transport plan
and looking at 'import' instead of 'export', we get the same inequality if the mass is smaller than $t \overline{f}$. Therefore,
\begin{align*}
  \int_{\mathbb{T}}{ |T(x) - x| f(x) dx} &=  \int_{\mathbb{T}}{  \frac{1}{t} \int_{x}^{x+t}{|T(y) - y| f(y) dy} dx} \\
&\gtrsim  \frac{1}{\|f\|_{L^{\infty}(\mathbb{T})}}  \int_{\mathbb{T}}{  \frac{1}{t}  \left( \int_{x}^{x+t}{f(y) dy} - t \overline{f} \right)^2 dt}.
\end{align*}
An expansion in Fourier series shows that
$$  \int_{x}^{x+t}{f(y) dy} - t \overline{f} = \sum_{|k| \leq N \atop k \neq 0}{\widehat{f}(k) \frac{ e^{i k x}}{ik} \left( e^{ikt} -1 \right)  }$$  
and thus
$$    \frac{1}{ \|f\|_{L^{\infty}(\mathbb{T})}}  \int_{\mathbb{T}}{\frac{1}{t} \left( \int_{x}^{x+t}{f(y) dy} - t \overline{f} \right)^2 dx} =   \frac{1}{ \|f\|_{L^{\infty}(\mathbb{T})}}  \frac{1}{t} \sum_{|k| \leq N \atop k \neq 0}{\frac{|\widehat{f}(k)|^2 }{k^2} \left| e^{ikt} -1 \right|^2  }.$$
This is valid for all $t > 0$. We can therefore average over $0 \leq t \leq \pi$ and obtain
\begin{align*}
\frac{1}{\pi}\int_{0}^{\pi}{ \frac{1}{t \|f\|_{L^{\infty}(\mathbb{T})}}  \sum_{|k| \leq N \atop k \neq 0}{\frac{|\widehat{f}(k)|^2 }{k^2} \left| e^{ikt} -1 \right|^2  } dt}      \gtrsim \frac{1}{\|f\|_{L^{\infty}(\mathbb{T})}}  \sum_{|k| \leq N \atop k \neq 0}{\frac{|\widehat{f}(k)|^2 }{k^2} \int_{0}^{\pi}{ \frac{\left| e^{ikt} -1 \right|^2}{t} dt}  }.
\end{align*}
This integral is easy to bound: the integrand can be simplified as 
\begin{align*}
\int_{0}^{\pi}{ \frac{\left| e^{ikt} -1 \right|^2}{t} dt} = \int_{0}^{\pi}{ \frac{2 - 2\cos{(kt)}}{t} dt} &\sim \sum_{\ell = 1}^{k}{ \int_{\frac{\pi \ell}{k}}^{\frac{\pi (\ell+1)}{k}}{ \frac{2 - 2\cos{(kt)}}{t} dt}} \\
&\sim  \sum_{\ell = 1}^{k}{ \frac{1}{k} \frac{k}{\pi (\ell+1)}} \sim \log{k}.
\end{align*}
This implies the desired result.
\end{proof}

\section{Proof of Theorem 3}
\begin{proof} We only discuss the proof for $\alpha = \sqrt{2}$, the general proof is completely identical, since we only use the fact that $\sqrt{2}$ is badly approximable (but, of course, all implicit
constants will depend on $\alpha$).
The Fourier coefficients are easily bounded since the exponential sum is a geometric series
$$ \left| \widehat{\mu_N}(k) \right| = \left|  \frac{1}{N}  \sum_{n=1}^{N}{e^{2 \pi i k n\sqrt{2}}} \right| = \frac{1}{N} \left| \frac{e^{2\pi k (N+1) \sqrt{2} } - 1}{e^{2\pi i k \sqrt{2}} - 1} \right| \leq \frac{1}{N} \frac{2}{\left| e^{2\pi i k \sqrt{2}} - 1 \right|} \lesssim \frac{1}{N} \frac{1}{ \left\| k \sqrt{2} \right\|},$$
where $\| x \| = \min(x-\left\lfloor x \right\rfloor, \left\lceil x \right\rceil - x)$ is the distance to the nearest integer. The problem thus essentially reduces to
studying the distribution of $\left\| k \sqrt{2} \right\|$. Here, we can use a classic algebraic trick to conclude that for any integer $a \in \mathbb{N}$
$$ \left| (k\sqrt{2} - a)(k \sqrt{2} + a) \right| = \left| 2 k^2 - a^2  \right| \geq 1 \qquad \mbox{and thus} \qquad \|k\sqrt{2}\| \gtrsim \frac{1}{|k|}.$$
This step does not work for general $\alpha$ but the inequality is still true and is equivalent to the definition of a badly approximable number.
We observe the following implication: the set
$$ \left\{ \|k \sqrt{2}\| : 2^{\ell} \leq k \leq 2^{\ell + 1}\right\}$$
is $\sim 2^{-\ell}$ separated and every element is bounded away from 0 and satisfies $\gtrsim 2^{-\ell}$. This allows us to bound
\begin{align*}
\frac{1}{n} + \sum_{k=1}^{n}{ \frac{ \left| \widehat{\mu_N}(k) \right|}{k} } &\lesssim \frac{1}{n} + \sum_{\ell=1}^{\log{n}}{ \sum_{2^{\ell} \leq k \leq 2^{\ell+1}}{  \frac{ \left| \widehat{\mu_N}(k) \right|}{k}}} \lesssim \frac{1}{n} + \sum_{\ell=1}^{\log{n}}{  \frac{1}{2^{\ell}}  \sum_{2^{\ell} \leq k \leq 2^{\ell+1}}{ \left| \widehat{\mu_N}(k) \right|}}\\
 &\lesssim \frac{1}{n} + \sum_{\ell=1}^{\log{n}}{  \frac{1}{2^{\ell}} \frac{1}{N} \sum_{2^{\ell} \leq k \leq 2^{\ell+1}}{ \frac{1}{\|k\sqrt{2}\|}}} \lesssim \frac{1}{n} + \sum_{\ell=1}^{\log{n}}{  \frac{1}{2^{\ell}} \frac{1}{N} \sum_{1 \leq m \leq 2^{\ell}}{ \frac{1}{(m/2^{\ell})}}} \\
 &\lesssim \frac{1}{n} + \frac{1}{N} \sum_{\ell=1}^{\log{n}}{ \ell} \sim \frac{1}{n} + \frac{(\log{n})^2}{N}
\end{align*}
and optimizing in $n$ leads to the choice $n=N$. The same kind of reasoning yields slightly better bounds for the Wasserstein distance. We use $|\widehat{\mu(k)}| \leq \|\mu\|_{L^1} = 1$ to conclude
\begin{align*}
 W_2(\mu_N, dx) \lesssim  \left(\sum_{k=1}^{\infty}{ \frac{ \left| \widehat{\mu_N}(k) \right|^2}{k^2} } \right)^{\frac{1}{2}} \leq   
\left(\sum_{k=1}^{N^2}{ \frac{ \left| \widehat{\mu_N}(k) \right|^2}{k^2} }   + \sum_{k=N^2+1}^{\infty}{ \frac{1}{k^2} }      \right)^{\frac{1}{2}} \leq \frac{1}{N} +  \left(\sum_{k=1}^{N^2}{ \frac{ \left| \widehat{\mu_N}(k) \right|^2}{k^2}} \right)^{\frac{1}{2}}
\end{align*}
We estimate, as above,
\begin{align*}
\sum_{k=1}^{N^2}{ \frac{ \left| \widehat{\mu_N}(k) \right|^2}{k^2} }&\lesssim  \sum_{\ell=1}^{\log{(N^2)}}{ \sum_{2^{\ell} \leq k \leq 2^{\ell+1}}{ \frac{ \left| \widehat{\mu_N}(k) \right|^2}{k^2} }} \lesssim  \sum_{\ell=1}^{\log{(N^2)}}{  \frac{1}{2^{2\ell}} \sum_{2^{\ell} \leq k \leq 2^{\ell+1}}{ \left| \widehat{\mu_N}(k) \right|^2}}\\
 &\lesssim  \frac{1}{N^2}\sum_{\ell=1}^{\log{(N^2)}}{  \frac{1}{2^{2\ell}} \sum_{2^{\ell} \leq k \leq 2^{\ell+1}}{ \frac{1}{\|k\sqrt{2}\|^2}}} \lesssim  \frac{1}{N^2} \sum_{\ell=1}^{\log{(N^2)}}{  \frac{1}{2^{2\ell}} \sum_{1 \leq m \leq 2^{\ell}}{ \frac{1}{(m/2^{\ell})^2}}} \\
 &\lesssim  \frac{1}{N^2}\sum_{\ell=1}^{\log{(N^2)}}{  \frac{1}{2^{2\ell}} \sum_{m=1}^{\infty}{ \frac{1}{(m/2^{\ell})^2}}}  \lesssim   \frac{1}{N^2}\sum_{\ell=1}^{\log{(N^2)}}{ 1} \sim \frac{ \log{(N^2)}}{N^2}
\end{align*}
and therefore
$$ W_2(\mu_N, dx) \lesssim \frac{\sqrt{\log{N}}}{N}.$$
\end{proof}

\section{Proof of Theorem 4}
\begin{proof}[Proof of Theorem 4]
We will first prove that if $f:\mathbb{T} \rightarrow \mathbb{R}_{\geq 0}$ is continuous with mean $\overline{f}$, then
$$ \frac{\|f - \overline{f}\|^2_{L^1(\mathbb{T})}}{\|f\|_{L^{\infty}(\mathbb{T})}} \frac{1}{  \# \left\{ x \in \mathbb{T}: f(x) = \overline{f}\right\} } \lesssim W_1(f(x)dx, \overline{f} dx) \lesssim  \left( \sum_{k=1}^{\infty}{ \frac{ |\widehat{f}(k)|^2}{k^2}}\right)^{\frac{1}{2}}.  $$
Once this is established, we will show how this implies the full desired statement.
The upper bound is fairly straightforward: we use homogeneity, the H\"older inequality and Peyre's estimate to conclude that
\begin{align*}
 W_1(f(x)dx, \overline{f} dx) &= \overline{f} \cdot W_1\left( \frac{f(x)}{\overline{f}} dx, dx\right)  \\
 &\leq 2\overline{f}\cdot W_2\left( \frac{f(x)}{\overline{f}} dx, dx\right)  \leq
2\overline{f} \left\|  \frac{f(x)}{\overline{f}} \right\|_{\dot H^{-1}(\mathbb{T})} =  \left\|  f \right\|_{\dot H^{-1}}.
\end{align*}
The lower bound requires a bit more work. 
We consider intervals $J \subset \mathbb{T}$ that are bounded by two solutions of $f(x) = \overline{f}$ and do not contain any additional solutions of this equation. If $f > \overline{f}$ in $J$, then there is a positive amount of mass $\|f\|_{L^1(J)}$ that needs to be moved out to obtain the constant measure $\overline{f} dx$.
The cost of moving the superfluous the mass inside $J$ depends on its distance to the boundary. We introduce the set
$$ A_J = \left\{x \in J: d(x, J^c) \geq \frac{1}{3}\frac{\|f - \overline{f}\|_{L^1(J)}}{\|f\|_{L^{\infty}(J)}}\right\},~\mbox{note} \quad  \frac{1}{3}\frac{\|f - \overline{f}\|_{L^1(J)}}{\|f\|_{L^{\infty}(J)}} \leq \frac{|J|}{3},$$
as well as
$$ |J \setminus A_J|  = \left| \left\{ x \in J: d(x, J^c) < \frac{1}{3}\frac{\|f - \overline{f}\|_{L^1(J)}}{\|f\|_{L^{\infty}(J)}}\right\} \right|   =  \frac{2}{3}\frac{\|f - \overline{f}\|_{L^1(J)}}{\|f\|_{L^{\infty}(J)}}$$
and claim
$$ \int_{A_J}{ f(x) dx} \geq \frac{1}{3} \|f - \overline{f}\|_{L^1(J)}.$$
This can be seen as follows
\begin{align*}
 \|f - \overline{f}\|_{L^1(J)}  &= \int_{A_J}{ (f(x) - \overline{f})dx} + \int_{J \setminus A_J}{ (f(x) - \overline{f} )dx}\\
&\leq \int_{A_J}{( f(x) - \overline{f}) dx}  + \left| J \setminus A_J \right| \|f\|_{L^{\infty}(J)} \\
&= \int_{A_J}{ (f(x) - \overline{f} )dx}  + \frac{2}{3} \|f - \overline{f}\|_{L^1(J)}.
\end{align*}
As a consequence
$$ W_1(f(x) dx,\overline{f} dx) \geq \sum_{J: f\big|_{J} > \overline{f}}{  \frac{1}{3} \|f-\overline{f}\|_{L^1(J)}    \left(\frac{1}{3}\frac{\|f-\overline{f}\|_{L^1(J)}}{\|f\|_{L^{\infty}(J)}}\right)^{}} =
 \frac{1}{9} \sum_{J: f\big|_{J} > \overline{f}}{  \frac{\|f-\overline{f}\|^{2}_{L^1(J)}}{\|f\|_{L^{\infty}(J)}}},$$
where the sum ranges over all intervals $J$ on which $f-\overline{f}$ is positive. Of course, the same argument can be reversed (this is similar to the proof of Theorem 2 above) and we can
take intervals where the average is smaller than $\overline{f}$ and argue in terms of mass that has to be imported and
$$ W_1(f(x) dx,\overline{f} dx) \gtrsim   \sum_{J: f\big|_{J} < \overline{f}}{  \frac{\|f-\overline{f}\|^{2}_{L^1(J)}}{\overline{f}}} \gtrsim  \sum_{J: f\big|_{J} < \overline{f}}{  \frac{\|f-\overline{f}\|^{2}_{L^1(J)}}{\|f\|_{L^{\infty}(J)}}}.$$
Altogether, this implies
yields
$$ W_1(f(x) dx,\overline{f} dx) \gtrsim \frac{1}{\|f\|_{L^{\infty}(\mathbb{T})}}   \sum_{J}{  \|f-\overline{f}\|^{2}_{L^1(J)}}.$$
The remaining sum is easy to bound via Cauchy-Schwarz
$$ \|f - \overline{f}\|_{L^1(\mathbb{T})} =  \sum_{J}{  \|f-\overline{f}\|^{}_{L^1(J)}} \leq \left(  \sum_{J}{  \|f-\overline{f}\|^{2}_{L^1(J)}} \right)^{\frac{1}{2}} \left(\# J\right)^{\frac{1}{2}}.$$
The number $\#J$ of such intervals obviously equals the number of solutions of $f(x) = \overline{f}$
$$ W_1(f(x) dx,\overline{f} dx) \gtrsim \frac{ \|f - \overline{f}\|^{2}_{L^1(\mathbb{T})} }{{\|f\|_{L^{\infty}(\mathbb{T})}}}  \frac{1}{ \# \left\{ x \in \mathbb{T}: f(x) = \overline{f}\right\}}.$$
By rearranging, this establishes the following inequality for nonnegative, continuous functions $f:\mathbb{T} \rightarrow \mathbb{R}_{\geq 0}$
$$ \frac{\|f - \overline{f}\|^2_{L^1(\mathbb{T})}}{\|f\|_{L^{\infty}(\mathbb{T})}} \lesssim   \# \left\{ x \in \mathbb{T}: f(x) = \overline{f}\right\} \left( \sum_{k=1}^{\infty}{ \frac{ |\widehat{f}(k)|^2}{k^2}}\right)^{\frac{1}{2}}.  $$
Let now $g: \mathbb{T} \rightarrow \mathbb{R}_{\geq 0}$ be continuous with mean value $0$. We abbreviate
$$ m = \left| \min_{x \in \mathbb{T}} g(x) \right| \qquad \mbox{and consider} \qquad f(x) = g(x) + m \geq 0.$$
Clearly, $\overline{f} = m$ and thus we can apply the inequality above
$$  \# \left\{ x \in \mathbb{T}: f(x) = \overline{f}\right\} \left( \sum_{k=1}^{\infty}{ \frac{ |\widehat{f}(k)|^2}{k^2}}\right)^{\frac{1}{2}} = 
 \# \left\{ x \in \mathbb{T}: g(x) =0 \right\} \left( \sum_{k=1}^{\infty}{ \frac{ |\widehat{g}(k)|^2}{k^2}}\right)^{\frac{1}{2}}.$$
Furthermore, 
$$  \|f - \overline{f}\|^2_{L^1(\mathbb{T})} = \|g\|^2_{L^1(\mathbb{T})} \qquad \mbox{and} \qquad  \|f\|_{L^{\infty}(\mathbb{T})}   \leq 2 \|g\|_{L^{\infty}(\mathbb{T})}$$
and therefore
$$   \frac{\|f - \overline{f}\|^2_{L^1(\mathbb{T})}}{\|f\|_{L^{\infty}(\mathbb{T})}}  \geq \frac{1}{2}  \frac{\|g\|^2_{L^1(\mathbb{T})}}{\|g\|_{L^{\infty}(\mathbb{T})}}.$$
\end{proof}

We quickly remark that there is an elementary argument that shows the simpler inequality
$$ (\mbox{number of roots of}~f) \cdot \left( \sum_{k=1}^{\infty}{ \frac{ |\widehat{f}(k)|}{k}}\right)^{}  \gtrsim \|f\|^{}_{L^1(\mathbb{T})}.$$
follows quickly from writing the $L^1-$norm of $f$ as 
$$ \|f\|_{L^1} = \sum_{k}{  \left| \int_{r_k}^{r_{k+1}}{ f(x) dx} \right|},$$
where the sum ranges over the roots $r_k$ ordered in increasing magnitude. We can then use this in combination with the uniform bound
$$  \left| \int_{r_k}^{r_{k+1}}{ f(x) dx} \right| \lesssim  \sum_{k=1}^{\infty}{ \frac{ |\widehat{f}(k)|}{k}}$$
to obtain the desired result. It could be interesting to understand whether there are other inequalities in the style of this one or Theorem 4.

\section{Proof of Theorem 5}
The main idea behind the proof of Theorem 5 is a reinterpretation of parabolic second order differential equations, more specifically the heat equation
$$ \left(\frac{\partial}{\partial t} - \Delta\right) u(t,x) = 0.$$ 
We observe that if $u(0,x) \geq 0$, then this process can actually be reinterpreted in physical terms: we have a nonnegative density of particles that starts moving and spreading out (this is the actual underlying idea behind the propagation of heat). We know from the spectral expansion that high-frequency eigenfunctions and linear combinations thereof have fast decay under the heat equation: within a short amount of time, the solution $u(t,x)$ will be close to constant. At the same time, if we only have to run the equation for a short time, then most of the particles are only moving a little bit. This intuition could be made precise in terms of stochastic formulations of the heat equation (by actually taking expectations over travel time of Brownian motion), we opted for a more elementary presentation in terms of Green's functions. The argument already had another application in \cite{steini}.

\begin{proof} Let $(M,g)$ denote a compact, smooth Riemannian manifold without boundary. We denote the $L^2-$normalized eigenfunctions of the
Laplacian as $(\phi_k)_{k=0}^{\infty}$ (where, as usual, $\phi_0 = (\vol(M))^{-1/2}$ is constant). Let now, for some $\lambda > 0$, $f:M \rightarrow \mathbb{R}$ denote
some continuous function whose spectral expansion is only comprised of eigenfunctions above a certain frequency $\lambda$
$$ f = \sum_{\lambda_k \geq \lambda}{ \left\langle f, \phi_k \right\rangle \phi_k}.$$
The argument is based on an explicit construction comprised of two steps: we first use the heat kernel as a way to organize transport to achieve a re-distribution
of mass that is very close to flat; the second step is the trivial $L^1-$estimate 
$$ W_p^p(f(x)dx, \overline{f} dx) \leq \mbox{diam}(M) \|f - \overline{f}\|_{L^1}.$$
The heat kernel $p(t,x,y): \mathbb{R}_{\geq 0} \times M \times M \rightarrow \mathbb{R}_{\geq 0}$ satisfies 
$$ \int_{M}{ p(t,x,y) dy} = 1.$$
In particular, it may be understood as a probability distribution. We re-interpret it as a transport plan telling us how to spread mass located at $x$. Linearity of integral operators
allows us to think of both positive mass $\max\left\{f, 0\right\}$ and negative mass $\min\left\{f, 0\right\}$ moving separately. 
The result of this
transport plan will be a new mass distribution given by
$$ g(x) = \int_{M}{ p(t,x,y) f(y) dy} \quad \mbox{at $W_p$-cost} \quad \left(\int_{M}{ \int_{M}{ |x-y|^p p(t,x,y) |f(y)| dy} dx}\right)^{\frac{1}{p}}.$$
This transportation cost is easy to bound: we use a classical bound of Aronson \cite{aronson}
$$ p(t,x,y) \leq \frac{c_1}{t^{n/2}} \exp \left( -\frac{|x-y|^2}{c_2 t} \right)$$
for some $c_1, c_2$ depending only on $(M,g)$ and obtain
$$ \int_{M}{ \int_{M}{ |x-y|^p p(t,x,y) |f(y)| dy} dx} \lesssim_{(M,g)}  \int_{M}{ \int_{M}{\frac{ |x-y|^p}{t^{n/2}} \exp \left( -\frac{|x-y|^2}{c_2 t} \right)|f(y)| dy} dx}.$$
However, it is easily seen that for some universal constants depending on the manifold
$$  \int_{M}{\frac{ |x-y|^p}{t^{n/2}} \exp \left( -\frac{|x-y|^2}{c_2 t} \right) dx} \lesssim_{c_2,p,(M,g)} t^{\frac{p}{2}}.$$
Altogether, this implies the transportation cost is bounded by (since $c_2 \lesssim_{(M,g)} 1$)
$$  \left(\int_{M}{ \int_{M}{ |x-y|^p p(t,x,y) |f(y)| dy} dx}\right)^{\frac{1}{p}} \lesssim_{p, (M,g)}   \sqrt{t} \|f\|_{L^1}^{\frac{1}{p}}.$$
The resulting profile $g(x)$ of this transportation map is easily understood since it is the solution of the heat equation and thus diagonalized by the Laplacian eigenfunctions
and thus
$$ g(x) = \int_{M}{ p(t,x,y) f(y) dy}  = \sum_{\lambda_k \geq \lambda}{ e^{-\lambda_k t} \left\langle f, \phi_k\right\rangle \phi_k}$$
and its $L^1-$norm can be bounded by Cauchy-Schwarz
\begin{align*} \|g\|^2_{L^1(M)} &= \left\| \sum_{\lambda_k \geq \lambda}{ e^{-\lambda_k t} \left\langle f, \phi_k\right\rangle \phi_k} \right\|^2_{L^1(M)} \leq \vol(M) \left\| \sum_{\lambda_k \geq \lambda}{ e^{-\lambda_k t} \left\langle f, \phi_k\right\rangle \phi_k} \right\|^2_{L^2(M)} \\
&= \vol(M) \sum_{\lambda_k \geq \lambda}{ e^{-2 \lambda_k t} \left| \left\langle f, \phi_k\right\rangle \right|^2} \lesssim_{(M,g)} e^{-2\lambda t}  \sum_{\lambda_k \geq \lambda}{ \left| \left\langle f, \phi_k\right\rangle \right|^2}
\lesssim_{(M,g)} e^{-2 \lambda t} \|f\|_{L^2(M)}^2
\end{align*}
and thus we can bound the total transport cost by
$$W_p\left(\max\left\{f(x), 0\right\}dx, \max\left\{-f(x), 0\right\} dx\right)^p \lesssim_{p, (M,g)}  t^{\frac{p}{2}} \|f\|_{L^1(M)} +  e^{- \lambda t} \|f\|_{L^2(M)}.$$
Setting 
$$t = \lambda^{-1} \log{\left( \frac{\lambda^{\frac{p}{2}} \|f\|_{L^2(M)}}{\|f\|_{L^1(M)}}\right)} $$
yields
$$ W_p^p(f,0) \lesssim_p 
\frac{ \left[  \log\left(  \lambda \frac{\|f\|_{L^2(M)}}{\|f\|_{L^1(M)}}   \right)\right]^{\frac{p}{2}}}{ \lambda^{\frac{p}{2}}} \|f\|_{L^1}.$$
If we re-run the argument with the additional information that $f= \phi_k$ is an eigenfunction, then we can use
$$ \int_{M}{ p(t,x,y) \phi_k(y) dy} = e^{-\lambda_k t} \phi_k(x)$$
to avoid the Cauchy-Schwarz inequality and obtain the improved estimate
$$W_p\left(\max\left\{\phi_k(x), 0\right\}dx, \max\left\{-\phi_k(x), 0\right\} dx\right)^p  \lesssim_{p, (M,g)} \left( t^{\frac{p}{2}} +  e^{- \lambda_k t}  \right) \|\phi_k\|_{L^1(M)}.$$
Setting $t = \lambda_k^{-1} \log{\lambda_k}$ yields the desired result
$$ W_p\left(\max\left\{\phi_k(x), 0\right\}dx, \max\left\{-\phi_k(x), 0\right\} dx\right)^p \lesssim_{p, (M,g)} \left(\frac{ \log{\lambda_k}}{\lambda_k}\right)^{\frac{p}{2}} \|\phi_k\|_{L^1(M)}.$$
\end{proof}

\textbf{Acknowledgment.} The author is very grateful to an anonymous referee for a very thorough and detailed reading as well
as several suggestions that greatly improved the quality of the manuscript.

\end{document}